\colorlet{seccolor}{SaddleBrown}
\newcommand*\email[1]{\href{mailto:#1}{\nolinkurl{#1}}}
\newcommand*\mytitle{Extension of Matrix Pencil Reduction to Abelian Categories}
\title{\mytitle}
\author{Olivier Verdier\thanks{\email{olivier.verdier@hvl.no}}}
\affil{Department of Computing, Mathematics and Physics\\
Western Norway University of Applied Sciences, Norway}
\affil{Department of Mathematics and Mathematical Statistics\\
  Umeå University, Umeå, Sweden}
\newcommand\obsredmat{
	\begin{scope}[yscale=-1,scale=1.2]
		\draw[step=1cm] (0,0) grid (2,2);
		\node at (.5,.5) {$\E\ored,\A\ored$};
		%% \fill[gray] (1,0) rectangle (2,1);
		\node at (1.5,.5) {$\star,\star$};
		\node at (1.5,1.5) {$0, \pline$};
		\node at (.5,1.5) {$0, 0$};
		\node[above] at (.5,0) {$\Mv\ored$};
		\node[above] at (1.5, 0) {$\coim\pline$};
		\node[left] at (0,.5) {$\Vv\ored$};
		\node[left] at (0,1.5) {$\coker\E$};
	\end{scope}
}
\newcommand{\contredmat}{
	\begin{scope}[yscale=-1,scale=1.2]
		\draw[step=1cm] (0,0) grid (2,2);
		\node at (.5,.5) {$0, \rline$};
		%% \fill[gray] (1,0) rectangle (2,1);
		\node at (.5,1.5) {$0,0$};
		\node at (1.5,1.5) {$\E\cred, \A\cred$};
		\node at (1.5,.5) {$\star, \star$};
		\node[above] at (.5,0) {$\ker\E$};
		\node[above] at (1.5, 0) {$\Mv\cred$};
		\node[left] at (0,.5) {$\im\rline$};
		\node[left] at (0,1.5) {$\Vv\cred$};
	\end{scope}
}
\NewDocumentCommand\ninediag{mmmO{}}{
	\begin{tikzpicture}[ineq]
		\matrix(m)[smallcommdiag]
		{ \& 0 \& 0 \& 0 \& \\
		0 \& 
		#1
		% \ker\E\ored \& \ker\E \& \coim\prline\diagdim{\nildef[1]} 
		\& 0\\
		0 \& 
		#2
		% \Mv\ored \& \Mv \& \coim\pline 
		\& 0 \\
		0 \& 
		#3
		% \Vv\ored[2] \& \Vv\ored \& \coker\E\ored 
		\& 0 \\
		 \& 0 \& 0 \& 0 \&  \\};
		\path[->]
		(m-1-2) edge (m-2-2)
		(m-2-2) edge (m-3-2)
		(m-3-2) edge  (m-4-2)
		(m-4-2) edge (m-5-2)
		;
		\path[->]
		(m-1-3) edge (m-2-3)
		(m-2-3) edge (m-3-3)
		(m-3-3) edge  (m-4-3)
		(m-4-3) edge (m-5-3)
		;
		\path[->,
		% amph
		]
		(m-1-4) edge (m-2-4)
		(m-2-4) edge (m-3-4)
		(m-3-4) edge  (m-4-4)
		(m-4-4) edge (m-5-4)
		;
		% \path[->]
		% (m-2-5) edge (m-3-5)
		% (m-3-5) edge (m-4-5)
		% (m-4-5) edge (m-5-5)
		% ;
		\path[->,
		% amph
		]
		(m-2-1) edge (m-2-2)
		(m-2-2) edge (m-2-3)
		(m-2-3) edge  (m-2-4)
		(m-2-4) edge (m-2-5)
		;
		\path[->]
		(m-3-1) edge (m-3-2)
		(m-3-2) edge (m-3-3)
		(m-3-3) edge  (m-3-4)
		(m-3-4) edge (m-3-5)
		;
		\path[->]
		(m-4-1) edge (m-4-2)
		(m-4-2) edge (m-4-3)
		(m-4-3) edge (m-4-4)
		(m-4-4) edge (m-4-5)
		;
		#4
	\end{tikzpicture}
}
\tikzset{commdiag/.style={matrix of math nodes, row sep=2.5em, column sep=2em, text height=1.5ex, text depth=0.25ex,ampersand replacement=\&},
smallcommdiag/.style={commdiag, row sep=2em, column sep=1.5em, font=\small},
exseq/.style={commdiag, column sep=2em},
diagequal/.style={double, double distance=2pt, -},
>=stealth,
ineq/.style={baseline=(current  bounding  box.center)},% chktex 36
amph/.style={very thick},
}
\newenvironment{thmenumerate}{\begin{enumerate}[label=\upshape(\roman*)]}{\end{enumerate}}
\newcommand{\picturesfolder}{./pictures}
\newcommand{\matrixfigure}[4][.6]{\begin{figure}
%% \begin{center}
\begin{tikzpicture}
	\coordinate (mat) at (0,0);
	\coordinate (leg) at (9,-1);
	%% \fill[red] (mat) circle (2pt);
	%% \fill[red] (leg) circle (2pt);
	\node[anchor=north west] at (mat) {\includegraphics[width=#1\textwidth]{\picturesfolder/#3}};
	\node[anchor=north west] at (leg) {\includegraphics[scale=.7]{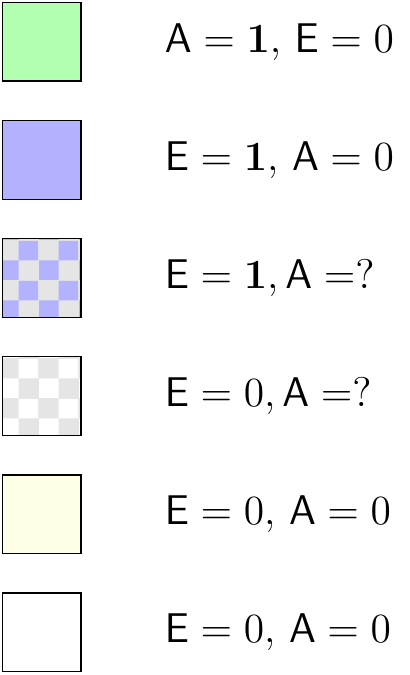}};
\end{tikzpicture}
%% \end{center}
\caption{#4}
\label{#2}
\end{figure}}
\newcommand*{\alert}[1]{\textit{\textbf{#1}}}
\newcommand*\demph[1]{\textbf{\emph{#1}}}
\newcommand*{\mat}[1]{\mathsf{#1}}
\newcommand*\NN{\mathbf{N}}
\newcommand*{\inv}{^{-1}}
\newcommand*\mul{\,}
\newcommand*\resolvent{\rho}
\DeclareMathOperator{\im}{im}
\NewDocumentCommand\diagdim{mO{}}{\textcolor{Blue}{#1}}
\newcommand*\GL[1][d]{\ensuremath{\mathsf{GL}(#1)}}
\newcommand*\RR{\mathbf{R}}
\newcommand*\Mv{M}
\newcommand*\Vv{V}
\NewDocumentCommand\ored{O{1}}{\IfValueTF{#1}{^{#1}}{}}
\NewDocumentCommand\cred{O{1}}{\IfValueTF{#1}{_{#1}}{}}
\NewDocumentCommand\rred{O{1}O{1}}{\ored[#1]\cred[#2]}
\NewDocumentCommand\rline{O{\A}}{\left\lvert{#1}\right.}
\NewDocumentCommand\pline{O{\A}}{\left.{#1}\right\rvert}
\NewDocumentCommand\prline{O{\A}}{\abs{#1}}
\newcommand{\coker}{\operatorname{coker}}
\newcommand{\coim}{\operatorname{coim}}
\NewDocumentCommand\sect{O{}m}{\bracket[#1]{#2}}
\newcommand\Uv{\Mv}
\newcommand\W{\Vv}
\newcommand\Sop[1][]{\mat{S}_{#1}}
\newcommand{\kE}{\ker\E}
\newcommand{\EU}{\im\E}
\DeclareMathOperator{\ind}{ind}
\newcommand{\E}{\mat{E}}
\newcommand{\A}{\mat{A}}
\newcommand{\Pm}{\mat{P}}
\newcommand{\Qm}{\mat{Q}}
\newcommand{\Id}{\mathbb{I}}
\newcommand\one{\mat{1}}
\newcommand\transpose{{\!\scriptstyle\mathsf{T}}} 	% Transpose
\newcommand\sys{(\E,\A)}
\newcommand{\DM}{\Delta\Mv}
\newcommand{\DV}{\Delta\Vv}
\newcommand{\CC}{\mathbf{C}}
\newcommand{\Rm}{\mat{R}}
\newcommand{\Eb}{\mat{F}}
\newcommand{\Ab}{\mat{B}}
\newcommand\vv{W}
\newcommand\mv{N}
\newcommand*\nildef[1][]{\nu_{#1}}
\newcommand{\AoneEzero}{\newcommand*{\Acol}[1]{1}\newcommand*{\Ecol}[1]{0} }
\newcommand{\EoneAzero}{\newcommand*{\Acol}[1]{0}\newcommand*{\Ecol}[1]{1} }
\begin{document}

\maketitle

\begin{abstract}
Matrix pencils, or pairs of matrices, are used in a variety of applications.
By the Kronecker decomposition Theorem, they admit a normal form.
This normal form consists of four parts, one part based on the Jordan canonical form, one part made of nilpotent matrices, and two other dual parts, which we call the observation and control part.
The goal of this paper is to show that large portions of that decomposition are still valid for pairs of morphisms of modules or abelian groups, and more generally in any abelian category.
% This gives a new perspective even in the vector space case, as we have to use radically new proof techniques to work on abelian categories.
In the vector space case, we recover the full Kronecker decomposition theorem.
The main technique is that of reduction, which extends readily to the abelian category case.
Reductions naturally arise in two flavours, which are dual to each other.
% It turns out that this technique directly extends to abelian categories.
There are a number of properties of those reductions which extend remarkably from the vector space case to abelian categories.
First, both types of reduction commute.
Second, at each step of the reduction, one can compute three sequences of invariant spaces (objects in the category), which generalize the Kronecker decomposition into nilpotent, observation and control blocks.
These sequences indicate whether the system is reduced in one direction or the other.
In the category of modules, there is also a relation between these sequences and the resolvent set of the pair of morphisms, which generalizes the regular pencil theorem.
We also indicate how this allows to define invariant subspaces in the vector space case, and study the notion of strangeness as an example.
% In particular, a pair of matrices $(\E,\A)$ may be interpreted as the differential equation $\E x' + \A x = 0$.
% Such an equation is invariant by changes of variables, or linear combination of the equations.
% This change of variables or equations is associated to a group action.
% The invariants corresponding to this group action are well known, namely the Kronecker indices and divisors.
% Similarly, for another group action corresponding to the weak equivalence, a complete set of invariants is also known, among others the strangeness.
% 
% Those invariants can be defined in a directly invariant fashion, i.e. without using a basis or an extra Euclidean structure.
% That is done via a reduction process, coming in two flavors, which produces new system out of the original one.
% We insist on the fact that those reduction procedures are defined without recourse to any choice of basis.
% The various invariants may then be defined from operators related to the repeated application of the reduction process.
% % We then show the relation between the invariants and the reduced subspace dimensions, and the relation with the regular pencil condition.
% % This is all done using invariant tools only.
% 
% Finally, we present in appendix the construction of the Kronecker canonical form, as well as the strangeness canonical form associated to weak equivalence.
\end{abstract}

\begin{description}
	\item[Keywords] Matrix Pencil, Kronecker Decomposition, Reduction, Strangeness
	\item[Mathematics Subject Classification (2010)] 15A03, 15A21, 15A22, 18Exx
\end{description}

\tableofcontents

\section{Introduction}

The purpose of this paper is to show that the Kronecker decomposition theorem for pairs of matrices (or linear matrix pencils) admits a far-reaching generalization on modules and abelian groups.

We begin by recalling the Kronecker decomposition theorem~\cite[\S\,XII.5]{Gantmacher}.
% The Kronecker decomposition theorem can be summarized as follows.%\cite{Johansson}.
Suppose we have a pair of operators $\E$, and $\A$ from $\Mv$ to $\Vv$, both finite dimensional vector spaces.
% The Kronecker theorem states that there is a special choice of basis in $\Mv$ and $\Vv$ such that the operators $\E$ and $\A$ can be decomposed as follows.

Let the rectangular {bidiagonal blocks} {$\mat{L}_k^{\E}$} and {$\mat{L}_k^{\A}$} be defined by

\newcommand{\KronLBlock}{\left.\begin{bmatrix}
\Ecol{1} &  &   &  \\
\Acol{1}  & \Ecol{1} &  & \\
& \ddots & \ddots & \\
&& \Acol{1} & \Ecol{1} \\
&& & \Acol{1}
\end{bmatrix}\quad\right\} {k+1}}

\[\mat{L}_{k}^{\E} := {\EoneAzero\KronLBlock}
,
\qquad \mat{L}_k^{\A} := {\AoneEzero\KronLBlock}
.
\]

Let the {nilpotent blocks} {$\mat{N}_k^{\E}$} and {$\mat{N}_k^{\A}$} be defined by

\newcommand{\Nilpotentblock}{\left. \begin{bmatrix}
	\Acol{1} & \Ecol{1} &  & & \\
	  & \Acol{1} & \Ecol{1} &  & \\
	  &   & \ddots & \ddots & \\
	& & & \Acol{1} & \Ecol{1}\\
		& & & & \Acol{1}
\end{bmatrix}\quad\right\} {k+1}}

\newcommand\diagEA[1]{\ensuremath{\operatorname{diag}(
\mat{N}_{k_1}^{#1},\ldots,\mat{N}_{k_m}^{#1},
\mat{L}_{k_1}^{#1},\ldots,\mat{L}_{k_p}^{#1},
\paren{\mat{L}_{k_1}^{#1}}^{\transpose},\ldots,\paren{\mat{L}_{k_q}^{#1}}^{\transpose})}}

\begin{align}
	\label{eq:def_bidinil}
	\mat{N}_k^{\E} := {\EoneAzero \Nilpotentblock}
,
\qquad \mat{N}_k^{\A} := {\AoneEzero \Nilpotentblock}
.
\end{align}

A \emph{Kronecker decomposition} of the system $(\E,\A)$ is a choice of basis of $\Mv$ and $\Vv$ such that $\E$ and $\A$ are decomposed in blocks of the same size, i.e.,
\[\E = \begin{bmatrix}
	 \Id & 0 \\ 0 & \Delta^{\E}
\end{bmatrix}, \qquad \A = \begin{bmatrix}
	\A' & 0 \\ 0 & \Delta^{\A}
\end{bmatrix}
,
\]
where $\A'$ is an arbitrary matrix which can be further reduced in Jordan canonical form, and $\Delta^{\E}$ and $\Delta^{\A}$ are in diagonal block form
\begin{subequations}
\begin{align}
\Delta^{\E} &= \diagEA{\E}
, \\
\Delta^{\A} &= \diagEA{\A}
.
\end{align} 
\end{subequations}
% where the blocks of $\E$ and $\A$ have the same size.

For a given pair $(\E,\A)$ we  can thus define, for any integer $k$, the integers $\nildef[k]$, $\beta\ored[k]$ and $\beta\cred[k]$ such that there are:
\begin{itemize}
	\item $\nildef[k]$ block of type $\mat{N}_{k}$,
	\item $\beta\ored[k]$ blocks of type $\mat{L}_k$,
	\item $\beta\cred[k]$ blocks of type $\mat{L}_k^{\transpose}$.
\end{itemize}

In order to understand how this can be generalized, we define the concept of \emph{reduction}.
The idea is that the reduction procedure can be directly generalized to abelian groups and $R$-modules.

For vector spaces, this concept was gradually developed, under various names, or no name at all, first in~\cite{Wong} for the study of regular pencils, then in~\cite[\S4]{Wilkinson} and~\cite{vanDooren} to prove the Kronecker decomposition theorem.
It is also related to the \emph{geometric reduction} of nonlinear implicit differential equations as described in~\cite{Reich} or~\cite{Rabier-Rheinboldt}. 
In the linear case, those coincide with the observation reduction~\cite{thesis}.
It is also equivalent to the algorithm of prolongation of ordinary differential equation in the formal theory of differential equations~\cite{Reid}.

In~\cite{Wilkinson}, one considers also the \emph{conjugate} of the reduction, i.e., the operation obtained by transposing both matrices, performing a reduction and transposing again.
In order to make a distinction between both operations, we call the first one ``observation'' reduction, and the latter, ``control'' reduction.
The control reduction coincides with one step of the \emph{tractability chain}~\cite{Marz}.

Both reductions also appear in the context of ``linear relations''.
For a system of operators $(\E,\A)$ defined from $\Uv$ to $\W$, there are two corresponding linear relations, which are subspaces of $\Uv\times\Uv$ and $\W\times\W$.
These are respectively called the left and right linear relation (\cite[\S~6]{BaskakovSpectral},~\cite[\S~5.6]{BaskakovRepresentation}).
As one attempts to construct semigroup operators, it is natural to study the iterates of those linear relations.
That naturally leads to iterates of observation or control reduction.

The reduction procedures come in two dual flavours, which we call observation and control reduction.
Both create  a new, smaller system, out of a pair of operators $(\E,\A)$.
The reductions are depicted in \autoref{fig:reductions}.
% ``Smaller'' is in the sense that the reduced operators $\E\ored$ and $\A\ored$ are restrictions of $\E$ and $\A$ on subspaces of $\Mv$ and $\Vv$, defined by $\Vv\ored\coloneqq\E\Mv$ and $\Mv\ored\coloneqq\A\inv\Vv\ored$.

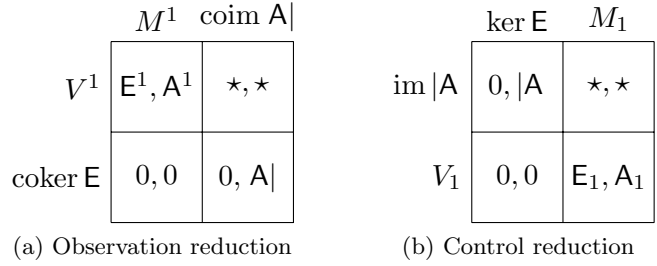
\begin{figure}
	\centering
		\subfloat[Observation reduction]{
	\begin{tikzpicture}
		\obsredmat
	\end{tikzpicture}
	}\qquad
	\subfloat[Control reduction]{
		\begin{tikzpicture}
		\contredmat	
	\end{tikzpicture}
}
	\caption{
		An illustration of the two possible reduction procedures.
		The operator $\pline$ is the operator $\A$ followed by a projection on $\coker\E$.
		The operator $\rline$ is the restriction of $\A$ on $\ker\E$.
		This suffices to define the observation-reduced system $(\E\ored,\A\ored)$ defined from $\Mv\ored$ to $\Vv\ored$ (see precise definitions in \eqref{eq:redspaces}).
	Dually, this also defines the control-reduced system $(\E\cred,\A\cred)$ defined from $\Mv\cred$ to $\Vv\cred$.
	These reductions may be iterated, and, crucially, the reductions commute (see \autoref{fig:commutativity}).
	}
\label{fig:reductions}
\end{figure}

This process of reduction is iterated, producing systems $(\E\ored[k],\A\ored[k])$ and subspaces $\Mv\ored[k]$ and $\Vv\ored[k]$.
This process ultimately stops, and we will call the number of steps before it stops the \emph{observation index}.
When the process stops, the system which is produced, denoted by $(\E\ored[\infty],\A\ored[\infty])$, is such that $\E\ored[\infty]$ is surjective.
% After running the reduction algorithm once more on the dual of that reduced system, i.e., on $(\E\oreds,\A\oreds)$, one obtains an isolated system $(\E\oredss,\A\oredss)$ such that $\E\oredss$ is now invertible.

There is a similar description of the control reduction.
When the system cannot be control-reduced, it means that the operator $\E$ is injective.
The \emph{control index} is the number of reductions needed before the system is no longer control-reducible.

Before proceeding further, we notice a fundamental property of the reductions.
As it stands now, the \emph{order} of the reductions could matter.
For instance, a control reduction followed by an observation reduction could lead to different, non-isomorphic systems.
Remarkably, such is not the case, and \emph{the reductions commute}.
Even more remarkable is that this commutation property of the reductions still holds in the much more general setting of abelian categories.
This property is summarized in \autoref{fig:commutativity}.

\tikzset{redline/.style={red, ultra thick}}

\newcommand*\drawgrid{
	\draw[step=1cm,semitransparent, gray,thin] (0,0) grid (4,4);
	% \draw[step=.5cm,gray,thin] (0,0) grid (2.5,2.5);
	\draw[thick,->,] (0,0) --  (4.5,0);
	\node[right] at (4.5,0) {obs.};
	\draw[thick,->,] (0,0) --  (0,4.5);
	\node[below, anchor=east, rotate=90] at (0,4.5) {cont.};
}

\begin{figure}
	\centering	
\begin{tikzpicture}[yscale=-1]
	\drawgrid	
	\begin{scope}[scale=2]
	\draw[ultra thick,DarkOrchid] (0,0) -- (.5,0) -- (.5,.5) -- (.5,1) -- (1,1) -- (1.5,1);
	\draw[ultra thick,DodgerBlue] (0,0) -- (0,.5) -- (.5,.5) -- (1,.5) -- (1.5,.5) -- (1.5,1);
	\fill[DimGrey] (1.5,1) circle (.8mm);
	\node[right] at (2,2) {$(\E\rred[4][4],\A\rred[4][4])$};
	\node[above] at (2,0) {$(\E\ored[4],\A\ored[4])$};
	\node[left] at (0,2) {$(\E\cred[4],\A\cred[4])$};
	\node[above right] at (1.5,1) {$(\E\rred[3][2],\A\rred[3][2])$};
\end{scope}
	
\end{tikzpicture}
\caption{
	A schematic illustration of reduction commutativity.
	The result of \autoref{prop:commutativity} is that the order of the reduction does not matter.
	Suppose that one applies three observation-reduction followed by two control-reductions, leading to $\paren[\big]{(\E\ored[3])\cred[2],(\A\ored[3])\cred[2]}$.
	Then the same system would be obtained by following either of the paths in the picture above.
	We denote the resulting system by $(\E\rred[3][2],\A\rred[3][3])$.
}
\label{fig:commutativity}
\end{figure}
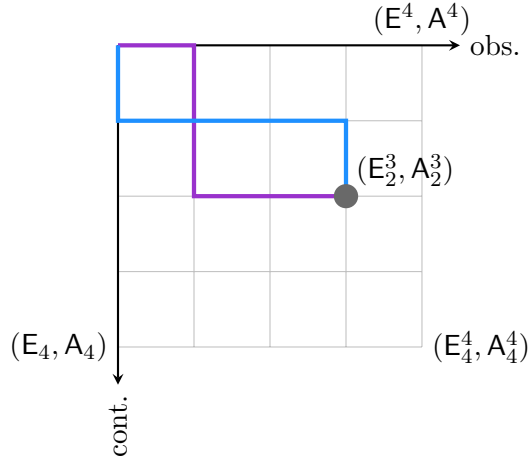

At each step of the reduction, some information from the original system is lost.
That information is encoded by integers called ``defects''.
For a given system $(\E,\A)$, those defects are of three kinds: $\nildef[0]$, $\beta\ored[0]$ and $\beta\cred[0]$.
The defect $\nildef[0]$ is defined as the dimension of the image of $\prline$, which is defined as restriction $\A$ restricted on $\ker\E$ and projected on $\coker\E$.
The defect $\beta\ored[0]$ is defined as the dimension of the cokernel of $\pline$, defined as the projection of $\A$ on $\coker\E$.
Dually, the defect $\beta\cred[0]$ is defined as the dimension of the kernel of $\rline$, defined as the restriction of $\A$ on $\ker\E$.

Both defects $\beta\ored[0]$ and $\beta\cred[0]$ have an intuitive interpretation.
As $\pline$ is the operator $\A$ followed by a projection on $\coker\E$, the space $\coker\pline$ is the space which is both in the cokernel of $\E$ and $\A$.
In matrix notation, this would correspond to rows of zeros for both matrices.
We call it the \emph{observation defect}, as the corresponding equations are observations (since they are in the cokernel of $\E$), but they observe nothing at all (as they are in the cokernel of $\A$ as well).

Dually, the control defect corresponds to the kernel of $\rline$, where $\rline$ is $\A$ restricted to $\ker\E$, so it is the space where both $\E$ and $\A$ are zero.
In matrix notation, it would correspond to columns of zeros for both matrices.
We call it the \emph{control defect} as the corresponding variables are controls (since they are in the kernel of $\E$), but then have no effect at all (as they are in the kernel of $\A$).

We know that the reductions commute, so reduced systems are characterized by two integers, recording how many observation and how many control reductions were performed.
At first glance, it would seem that we need to keep track of the defects of all those reduced systems, i.e., $\nildef\ored[0](\E\rred[m][n])$, $\beta\ored[0](\E\rred[m][n])$ and $\beta\cred[0](\E\rred[m][n],\A\rred[m][n])$.
Remarkably again, such is not the case, as we have the following simplifications, which are summarized on \autoref{fig:defects}:
\begin{itemize}
	\item $\beta\ored[0](\E\rred[m][n],\A\rred[m][n])  = \beta\ored[0](\E\ored[m],\A\ored[m]) $
	\item $\beta\cred[0](\E\rred[m][n],\A\rred[m][n])  = \beta\cred[0](\E\cred[n],\A\cred[n]) $
	\item $\nildef\ored[0](\E\rred[m][n],\A\rred[m][n])  = \nildef\ored[0](\E\ored[m+n],\A\ored[m+n]) = \nildef\ored[0](\E\cred[m+n],\A\cred[m+n]) $
\end{itemize}

\newcommand*\draworedline[1]{
		\node[rotate=90] at (#1,0) {$\coker\pline[\A\ored[#1]]$};
		\draw[redline] (#1,0) -- (#1,4);
}

\newcommand*\drawcredline[1]{
		\node at (0,#1) {$\ker\rline[\A\cred[#1]]$};
		\draw[redline] (0,#1) -- (4,#1);
}

\newcommand*\drawnilline[1]{
	\node[anchor=west, rotate=90] at (#1,0) {$\im\prline[\A\cred[#1]]$};
	\node[anchor=east] at (0,#1) {$\coim\prline[\A\ored[#1]]$};
	\draw[redline] (0,#1) -- (#1,0);
}

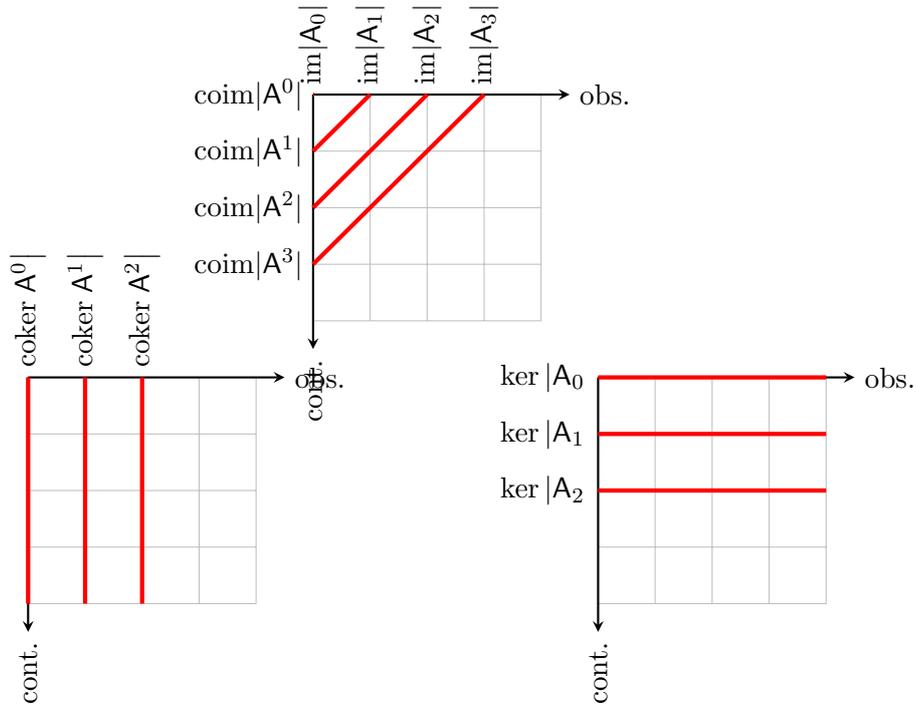
\begin{figure}
	\centering	
\begin{tikzpicture}[yscale=-1, scale=.75]
	\begin{scope}[xshift=-5cm, yshift=5cm, anchor=west]
		\drawgrid
		\draworedline{0}
		\draworedline{1}
		\draworedline{2}
	\end{scope}
	\drawgrid	
	\drawnilline{0}
	\drawnilline{1}
	\drawnilline{2}
	\drawnilline{3}
	\begin{scope}[xshift=5cm, yshift=5cm, anchor=east]
		\drawgrid
		\drawcredline{0}
		\drawcredline{1}
		\drawcredline{2}
	\end{scope}

\end{tikzpicture}
\caption{
	A description of the property of defects with respect to reduction; the proper statement is in \autoref{prop:defcomm}.
	Loosely speaking, an observation reduction leaves the sequence of \emph{control} defects unchanged, while a control reduction leaves the sequence of \emph{observation} defects unchanged.
	An observation or a control reduction ``consumes'' exactly the same nilpotency defect in the sense that the sequence of nilpotency defects is shortened by one after either reduction.
	The general idea is that those defects are a priori ``two-dimensional'' (i.e, depending on both the number of control and observation reductions), but turn out to be ``one-dimensional'' (i.e., depending only on a combination of the number of observation and control reductions), as the straight lines in the pictures above are to be interpreted as level curves.
	% The result of \autoref{prop:commutativity} is that the order of the reduction does not matter.
	% Suppose that the system is reduced after three control-reduction followed by two observation-reductions.
	% Then the same system would be obtain by following either of the paths in the picture above.
}
\label{fig:defects}
\end{figure}

One can show the following facts pertaining to the reductions:
\begin{itemize}
	\item The system is observation-irreducible if and only if $\E$ is surjective (and control-irreducible if and only if $\E$ is injective)
	\item The system is irreducible if and only if $\E$ is invertible
	\item The system is irreducible if and only if all the defective spaces vanish
	\item The order of the reduction of each type does not matter
	% \item An observation reduction leave the control spaces unchanged (and, dually, a control reduction leaves the observation spaces unchanged)
	% \item the operator $\E$ is invertible if and only if all the defects vanish
	% \item the pair $(\E,\A)$ is a regular pencil if and only if the $\beta^+$ and $\beta^-$ defects vanish
	% \item we show that the invariants defined in \cite{Mehrmann}, like the strangeness, may also be defined directly in an invariant manner, i.e., without using any extra structure or basis
\end{itemize}

% We also show that the defects and the system $(\E\redss,\A\redss)$ completely characterize the equivalence class corresponding to the equivalence relation \eqref{eq_equiv_rel}.
% \begin{itemize}
	% \item the defects are related to the Kronecker indices
	% \item the invariants defined in \cite{Mehrmann} may be used to construct a corresponding canonical form for weak equivalence: this connects the approaches of \cite{Wilkinson} and \cite{Mehrmann}
	%% \item to prove the previous claim, we construct a canonical form, of a new kind, and show that it is a rearrangement of the Kronecker decomposition
	% \item using the relation with the Kronecker decomposition theorem, we show that the defects of the dual system $(\E^*,\A^*)$ are related to those of $(\E,\A)$ by switching the $\beta^+$ and $\beta^-$ defects.
% \end{itemize}

For the purpose of the generalization, we regard those operators as \emph{morphisms} in the category of finite dimensional vector spaces instead.
We now look at what can be done to reduce a pair of morphisms from spaces $\Mv$ to $\Vv$, where $\Mv$ and $\Vv$ are now objects in the category considered: for instance, abelian groups, or $R$-modules for some ring $R$.

The reduction procedures, described above for vector spaces, turn out to make sense in such a general framework.
There are still two possible reductions, that we still call observation and control reductions.
For vector spaces,
the defects can be interpreted as dimensions of some invariant subspaces.
This is because the dimension of a space is indeed its only invariant, but such is not the case in other abelian categories.
We have therefore to replace the defects by defective spaces, and more generally, by defective objects in the category at hand.

We prove the following surprising results:
\begin{itemize}
	\item The observation and control reductions commute, as illustrated in \autoref{fig:commutativity}.
	\item One can define a sequence of defective objects (which we still call defects) which replaces the sequence of integer defects, and which behaves as in \autoref{fig:defects} with respect to the reductions.
	% \item An observation reduction leaves the sequence of control spaces unchanged, and vice-versa
	% \item The sequence of nilpotent spaces after a control reduction is the same as after an observation reduction
\item The defective spaces are related to the index in the same way: $\E$ is surjective if and only if all the observation and nilpotency defects are zero, and $\E$ is injective if and only if all the control and nilpotency defects are zero.
\item We define a system to be regular if all the observation and control defects vanish; in that case, the observation and control indices coincide.
\end{itemize}

In the category of $R$-modules, one can also define the equivalent of a resolvent set (see~\autoref{def:resolventset}):
\begin{align}
	\resolvent(\E,\A) \coloneqq \setc{\lambda \in \CC}{\lambda \E + \A \quad \text{is invertible}}
.
\end{align}
The following then holds:
\begin{itemize}
\item 
If any of the control or observation spaces are non-zero, then the resolvent is empty (see~\autoref{prop:regpencil}).
This should be reminiscent of the \emph{regular pencil theorem}~\cite[\S\,XII.2]{Gantmacher}.
\item
If, for some integers $m$ and $n$, the reduced system $(\E\rred[n][m], \A\rred[n][m])$ has a non-empty resolvent, and if all the control and observation spaces are zero, then the system $(\E,\A)$ has a non-empty resolvent (see~\autoref{prop:regpencilcor}).
\end{itemize}
We should also point out essential differences between the generalized approach and the usual decomposition in vector spaces.

\begin{itemize}
	\item The defects, which are integers in the vector space category, are replaced by objects in the category considered
	\item The reduction is not guaranteed to stall after a finite number of steps, i.e., the observation or control indices may be infinite
	\item For an irreducible system, one ends up with the characterization of the endomorphism $\E\inv A$, from $\Mv$ to $\Mv$, but this characterization may be much more involved (or even impossible) than the Jordan decomposition.
\end{itemize}

% \begin{columns}
% \begin{column}{.3\textwidth}
% \[
% \begin{split}
% x_1' &= \control{u}\\
% x_2' &= x_1\\
% &\vdots \\
% x_n' &= x_{n-1}\\
% \constraint{0 = x_n}
% \end{split}
% \]
% link control-constraint
% \end{column}
% \begin{column}{.3\textwidth}
% \[
% \begin{split}
% x_1' &= \control{u}\\
% x_2' &= x_1\\
% &\vdots \\
% x_{n+1}' &= x_n
% \end{split}
% \]
% Controlled variable
% \end{column}
% \begin{column}{.3\textwidth}
% \[
% \begin{split}
% x_1' &= 0\\
% x_2' &= x_1\\
% &\vdots \\
% \constraint{0 = x_n}
% \end{split}
% \]
% Observed variable
% \end{column}
% \end{columns}

\section{Reduction}
\label{sec_reduction}

We define here the notion of reduction of a pair of morphisms in an abelian category.

\subsection{Kernel and Images}

In an abelian category, the kernel, cokernel, image and coimage are defined for any morphism.

We now recall what those spaces are for vector spaces.
The \demph{kernel} of an operator $\E\colon\Mv\to\Vv$ is defined by
\(
	\ker\E = \setc{x \in \Mv}{\E x = 0}
\)
and its \demph{image} $\im\E$ is
\(
	\im\E = \setc{\E x}{x \in \Mv}
\).
The dual constructions are the \demph{cokernel}, defined by
\(
	\coker\E = \Vv / \im \E
\),
and the \demph{coimage}, defined by
\(
	\coim\E = \Mv / \ker \E
\).

In abelian categories, a morphism $\E\colon\Mv\to\Vv$ induces a surjective morphism 
\( \E \colon \Mv \to \im \E
	 \),
that we will still denote by $\E$.
Dually, $\E$ induces an injective morphism
\(
	\E \colon \coim\E \to \Vv
\),
that we will also still denote by $\E$.

A version of the rank-nullity Theorem in abelian categories is that the morphism $\E$ regarded as an operator from $\coim\E$ to $\im\E$ is an isomorphism.
In particular, we have $\coim\E = 0 \iff \im\E = 0$.

In the category of vector spaces, one can understand the coimage and cokernel using the adjoint.
If  $\E^* \colon \Vv^* \to \Mv^*$ denotes the adjoint operator, we have the natural identifications
\(
	\im \E^* = \coim \E  
\)
and
\(
\ker\E^* = \coker \E
\).

\subsection{System Reduction}
\label{red_space_sec}

Given an abelian category, we will call a pair of morphism $(\E,\A)$ a \alert{system}, if $\E$ and $\A$ are morphisms with the same domain and codomain:
\begin{align}
\E,\A \colon \Mv\longrightarrow \Vv
.
\end{align}

We define the operators
\begin{subequations}
	\begin{align}
		\label{eq:defpline}
		\pline \colon \Mv \to \coker \E
	\end{align}
	and
	\begin{align}
		\label{eq:defrline}
		\rline \colon \ker \E \to \Vv
	\end{align}
\end{subequations}
by projection and restriction of $\A$ respectively.

We then define the following \demph{reduced spaces} as
\begin{equation}
\label{eq:redspaces}
\begin{aligned}
	\Mv\ored  &= \ker\pline&
	\Mv\cred &= \coim \E \\
	\Vv\ored &= \EU&
	\Vv\cred &= \coker{\rline}
\end{aligned}
\end{equation}

\begin{proposition}
	In view of definitions~\eqref{eq:defpline} and~\eqref{eq:defrline}, the following diagrams commute and uniquely define the operators $\E\ored$, $\E\cred$, $\A\ored$, $\A\cred$.
See \autoref{fig:reductions} for a schematic description of those spaces and of the reduction.

\begin{subequations}
\begin{align}
	\label{eq:diagdefored}
\begin{tikzpicture}[ineq]
\matrix(m) [commdiag]%, column sep=3.5em, row sep=4em]
{0 \& \Uv\ored \& \Uv \& \coim \pline \& 0\\
0 \& \W\ored \& \W \& \coker{E} \& 0 \\};
\path[->]
(m-1-2) edge  (m-1-3)
(m-2-2) edge  (m-2-3)
(m-1-2) edge node[auto] {$\E\ored,\A\ored$} (m-2-2)
(m-1-3) edge node[auto] {$\E,\A$} (m-2-3)
(m-1-3) edge  (m-1-4)
(m-2-3) edge  (m-2-4)
(m-1-4) edge node[auto] {$0,\pline$} (m-2-4)
(m-1-1) edge (m-1-2)
(m-2-1) edge (m-2-2)
(m-1-4) edge (m-1-5)
(m-2-4) edge (m-2-5);
\end{tikzpicture}
\end{align}

\begin{align}
	\label{eq:diagdefcred}
\begin{tikzpicture}[ineq]
\matrix(m) [commdiag]%, column sep=3.5em, row sep=4em]
{0 \& \kE \& \Uv \& \Uv\cred \& 0\\
0 \& \im \rline \& \W \& \W\cred \& 0 \\};
\path[->]
(m-1-2) edge  (m-1-3)
(m-2-2) edge  (m-2-3)
(m-1-2) edge node[auto] {$0,\rline$} (m-2-2)
(m-1-3) edge node[auto] {$\E,\A$} (m-2-3)
(m-1-3) edge  (m-1-4)
(m-2-3) edge  (m-2-4)
(m-1-4) edge node[auto] {$\E\cred,\A\cred$} (m-2-4)
(m-1-1) edge (m-1-2)
(m-2-1) edge (m-2-2)
(m-1-4) edge (m-1-5)
(m-2-4) edge (m-2-5);
\end{tikzpicture}
\end{align}
% \begin{align}
% 	% \label{eq:diagdefcred}
% \begin{tikzpicture}[ineq]
% \matrix(m) [commdiag]%, column sep=3.5em, row sep=4em]
% {
% 0 \&  \W\cred \&\W \& \im \rline \& 0 \\
% 	0 \& \Uv\cred \& \Uv \& \kE \&  0\\
% };
% \path[<-]
% (m-1-2) edge  (m-1-3)
% (m-2-2) edge  (m-2-3)
% (m-1-2) edge node[auto] {$\E\cred,\A\cred$} (m-2-2)
% (m-1-3) edge node[auto] {$\E,\A$} (m-2-3)
% (m-1-3) edge  (m-1-4)
% (m-2-3) edge  (m-2-4)
% (m-1-4) edge node[auto] {$0,\rline$} (m-2-4)
% (m-1-1) edge (m-1-2)
% (m-2-1) edge (m-2-2)
% (m-1-4) edge (m-1-5)
% (m-2-4) edge (m-2-5);
% \end{tikzpicture}
% \end{align}
\end{subequations}

We will call the procedure leading to the new system
\begin{align}
	\E\ored,\A\ored \colon \Mv\ored \to \Vv\ored
\end{align}
the \demph{observation reduction}.

Similarly, the procedure leading to
\begin{align}
	\E\cred,\A\cred \colon \Mv\cred \to \Vv\cred
\end{align}
will be called the \demph{control reduction}.

\end{proposition}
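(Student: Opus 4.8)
The plan is to establish the observation diagram~\eqref{eq:diagdefored} in full and then obtain the control diagram~\eqref{eq:diagdefcred} by duality, since passing to the opposite category exchanges $\ker$ with $\coker$, $\im$ with $\coim$, and the projection $\pline$ with the restriction $\rline$. Throughout I work element-free, using only the universal properties of kernel and cokernel. First I would read off the two rows as short exact sequences directly from the definitions~\eqref{eq:redspaces}: the bottom row $0\to\im\E\to\Vv\to\coker\E\to0$ is exact by definition of image and cokernel, and, using $\Mv\ored=\ker\pline$, the top row $0\to\ker\pline\to\Mv\to\coim\pline\to0$ is exact by definition of kernel and coimage.

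Next I would check that the right-hand vertical arrow, labelled $0,\pline$, is well defined and makes the right square commute. Write $q\colon\Vv\to\coker\E$ for the canonical projection, so that $\pline=q\A$ by definition. For the $\A$-component, the factorisation of $\pline$ through its coimage yields the monomorphism $\coim\pline\to\coker\E$ again denoted $\pline$, and commutativity of the square is precisely the identity $q\A=\pline$ read through this factorisation. For the $\E$-component the induced arrow is $0$, because $\E$ factors through $\im\E=\ker q$, whence $q\E=0$.

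The heart of the argument is the existence and uniqueness of the left vertical arrow $\E\ored,\A\ored\colon\ker\pline\to\im\E$. Uniqueness is free: since $\im\E\hookrightarrow\Vv$ is monic, any arrow into $\im\E$ is determined by its composite with this inclusion. For existence I must show that the composites $\ker\pline\hookrightarrow\Mv\xrightarrow{\E}\Vv$ and $\ker\pline\hookrightarrow\Mv\xrightarrow{\A}\Vv$ both factor through $\im\E$. The first is immediate, as $\E$ already factors through its image. The second is the one genuinely content-bearing step: writing $\iota\colon\ker\pline\hookrightarrow\Mv$ for the inclusion, the defining property of the kernel gives $\pline\,\iota=0$, that is $q(\A\iota)=0$, so $\A\iota$ factors through $\ker q=\im\E$ by the universal property of the kernel. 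This produces $\A\ored$ and makes the left square commute.

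Finally I would transcribe the dual statement for~\eqref{eq:diagdefcred}: the rows are $0\to\ker\E\to\Mv\to\coim\E\to0$ and $0\to\im\rline\to\Vv\to\coker\rline\to0$; the left arrow $0,\rline$ commutes because $\E$ annihilates $\ker\E$ and $\rline$ is $\A$ restricted to $\ker\E$; and the right arrow $\E\cred,\A\cred\colon\coim\E\to\coker\rline$ exists by the universal property of the cokernel, its uniqueness coming from $\Mv\to\coim\E$ being epic. The only substantive point, dual to the one above, is that $\A$ followed by the projection $\Vv\to\coker\rline$ vanishes on $\ker\E$, because its restriction to $\ker\E$ equals $\rline$ followed by that projection and $\rline$ lands in $\im\rline$. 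I do not expect a serious obstacle here: everything is formal manipulation of universal properties, and the single conceptual observation is that $\ker\pline$ is exactly the largest subobject on which $\A$ takes values in $\im\E$ (dually for $\rline$ and $\coker\rline$).
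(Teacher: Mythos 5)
Your proposal is correct and follows essentially the same route as the paper: exactness of the rows by definition, commutativity of the outer square because $\E$ vanishes after projection to $\coker\E$, and then the induced arrows on kernels (dually, cokernels); the paper merely cites this last step as ``a general result, obtained by diagram chasing,'' whereas you spell it out via the universal properties. No gap.
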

\begin{proof}
	For the observation reduction, the right part of the diagram indeed commutes, simply by definition of $\pline$, and because $\E$ is zero when projected to $\coker\E$.
	The rows are exact by definition of $\Mv\ored$ and $\Vv\ored$.
	It is then a general result, obtained by diagram chasing, that the operators $\E\ored$ and $\A\ored$ are well defined, from $\Mv\ored$ to $\Vv\ored$.
	The claim for the control reduction is dual.
\end{proof}

We will denote by $(\E\ored[n],\A\ored[n])$ and $(\E\cred[n],\A\cred[n])$ the iterated observation and control reductions.
% As these operation necessarily stall, we will denote by $(\E\ored[\infty],\A\ored[\infty])$ and $(\E\cred[\infty], \A\cred[\infty])$ the corresponding operators obtained after sufficiently many reductions.

For the moment, the order of the reduction matters, but, as we shall see in \autoref{sec:commutativity}, the reduction procedures actually commute.
This will allow to make sense of the system
$(\E\ored[n]\cred[m], \A\ored[n]\cred[m])$,
which is a system observation-reduced $n$ times and control-reduced $m$ times, in whichever order.

\subsection{Generalized Kronecker Decomposition}

Before stating the decomposition result, we need a preliminary Lemma.
We first define
\begin{align}
	\prline \colon \ker \E \to \coker \E
\end{align}
by restricting $\A$ on $\ker\E$ and projecting the result to $\coker\E$.

\begin{lemma}
\label{prop:kercoker}
The following sequence is exact.
\begin{align}
\begin{tikzpicture}[ineq, ampersand replacement=\&]
\matrix(m) [commdiag,]
{
	0 \& \ker\E\ored \& \kE \& \coker\E \& \coker\E\cred \& 0\\
};
\path[->]
(m-1-1) edge (m-1-2)
(m-1-2) edge (m-1-3)
(m-1-3) edge node[auto] {$\prline$} (m-1-4)
(m-1-4) edge (m-1-5)
(m-1-5) edge (m-1-6);
\end{tikzpicture}
\end{align}
In other words, $\ker\E\ored = \ker\prline$, and $\coker\E\cred = \coker\prline$.
\end{lemma}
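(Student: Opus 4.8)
The plan is to reduce the exactness of the displayed sequence to the two identities $\ker\E\ored = \ker\prline$ and $\coker\E\cred = \coker\prline$ singled out in the statement. A four-term sequence $0 \to K \to A \to B \to C \to 0$ whose middle arrow is a fixed morphism $f\colon A \to B$ is exact exactly when the incoming map realizes $K$ as a kernel of $f$ and the outgoing map realizes $C$ as a cokernel of $f$. Here $A = \ker\E$, $B = \coker\E$, and $f = \prline$, so it suffices to show that the canonical map $\ker\E\ored \to \ker\E$ induced by $\Mv\ored \hookrightarrow \Mv$ is a kernel of $\prline$, and dually that the canonical map $\coker\E \to \coker\E\cred$ is a cokernel of $\prline$.

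For the kernel identity I would read off from the observation-reduction diagram~\eqref{eq:diagdefored} that $\E\ored$ is the restriction of $\E$ to the subobject $\Mv\ored = \ker\pline$, corestricted to $\Vv\ored = \im\E$. Since $\im\E \hookrightarrow \Vv$ is monic, the kernel is unaffected by this corestriction, so $\ker\E\ored = \ker(\E\,\iota)$ where $\iota\colon \ker\pline \hookrightarrow \Mv$ is the inclusion. For a monomorphism $\iota$ and a morphism $\E$, the kernel $\ker(\E\,\iota)$ is the inverse image of the subobject $\ker\E$ along $\iota$, that is, the intersection $\ker\pline \wedge \ker\E$ of subobjects of $\Mv$. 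On the other hand $\prline$ is, by definition, $\pline$ precomposed with the inclusion $\ker\E \hookrightarrow \Mv$, so $\ker\prline$ is likewise this intersection. As both are realized as the same subobject of $\ker\E$, we obtain $\ker\E\ored = \ker\prline$.

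The cokernel identity is the formal dual, obtained from the control-reduction diagram~\eqref{eq:diagdefcred}: there $\E\cred$ is the morphism induced by $\E$ on $\Mv\cred = \coim\E$ with values in $\Vv\cred = \coker\rline$, whence its cokernel is the quotient of $\Vv$ by the join $\im\E \vee \im\rline$ of subobjects. Since $\prline$ factors as the projection $\Vv \to \coker\E$ composed with $\rline$ viewed as a morphism into $\Vv$, its cokernel is the quotient of $\Vv$ by the same join. Hence $\coker\E\cred = \coker\prline$, compatibly with the projection out of $\coker\E$.

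I expect the principal difficulty to lie in the bookkeeping of subobjects and quotients in a general abelian category, where elementwise arguments are not directly available: the identification $\ker(\E\,\iota) = \ker\pline \wedge \ker\E$ and its dual for the join must be justified from the universal properties of pullbacks and pushouts, and one must confirm that the arrows occurring in the sequence really are the canonical inclusion into $\ker\E$ and the canonical projection out of $\coker\E$. A clean and fully rigorous shortcut is to verify the two identities by diagram chasing in the category of modules and then transport the conclusion back via the Freyd--Mitchell embedding (equivalently, Mac Lane's calculus of members); this is consistent with the diagram chasing already invoked for the preceding proposition.
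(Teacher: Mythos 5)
Your argument is correct and takes essentially the same route as the paper: both proofs identify $\ker\E\ored$ and $\ker\prline$ with the intersection $\ker\E\cap\ker\pline$ (using that $\Mv\ored=\ker\pline$ by definition, and that corestriction to the monomorphism $\im\E\hookrightarrow\Vv$ does not change the kernel), and both obtain the cokernel half by duality. Your extra care about realizing the two kernels as the same subobject of $\ker\E$, and the Freyd--Mitchell fallback for rigor, only make explicit what the paper's terser proof leaves implicit.
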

\begin{proof}
	We only prove the left part of the diagram, as the right part is obtained by duality.
	The proof relies on the observation that $\ker\E\ored = \ker\E \cap \Mv\ored$, and $\ker\prline = \ker\pline \cap \ker\E$.
	One concludes using that, by definition, $\Mv\ored = \ker\pline$.
\end{proof}

The following Theorem is the main ingredient of the generalized Kronecker decomposition defined in \autoref{prop:genkronecker}.

\begin{theorem}
\label{prop:kronecker}
	The following diagrams are exact.
	% We indicate in brackets the dimension of the spaces associated to the defects $\nildef[1]$ and $\beta_1$.
	We emphasize the spaces $\coker\pline$ and $\coim\prline$ which play a major role in the sequel (see \autoref{sec:defectspaces}).
	Particular emphasis is placed on the sequences in bold.
	The diagrams for the dual version of this Proposition  are in \autoref{sec:appendix}.
\begin{align}
	\label{diag:betaored}
\begin{tikzpicture}[ineq]
\matrix(m) [smallcommdiag, ]
{
	\& 0 \& 0 \& \&\\
	 0 \& \ker\A\ored  \& \ker\A  \&  0 \&\\
	0 \& \Uv\ored \& \Uv \& \coim \pline \& 0\\
0 \& \W\ored \& \W \& \coker{E} \& 0 \\
0 \& \coker\A\ored \& \coker\A \& \diagdim{\coker\pline}[\beta\ored] \& 0\\
\& 0 \& 0 \& 0 \&\\
};
\path[->, amph]
(m-2-1) edge (m-2-2)
(m-2-2) edge (m-2-3)
(m-2-3) edge (m-2-4)
;
\path[->]
(m-3-1) edge (m-3-2)
(m-3-2) edge (m-3-3)
(m-3-3) edge (m-3-4)
(m-3-4) edge (m-3-5)
;
\path[->]
(m-4-1) edge (m-4-2)
(m-4-2) edge (m-4-3)
(m-4-3) edge (m-4-4)
(m-4-4) edge (m-4-5)
;
\path[->, amph]
(m-5-1) edge (m-5-2)
(m-5-2) edge (m-5-3)
(m-5-3) edge (m-5-4)
(m-5-4) edge (m-5-5)
;
\path[->]
(m-1-2) edge (m-2-2)
(m-2-2) edge (m-3-2)
(m-3-2) edge node[auto] {$\A\ored$} (m-4-2)
(m-4-2) edge (m-5-2)
(m-5-2) edge (m-6-2)
;
\path[->]
(m-1-3) edge (m-2-3)
(m-2-3) edge (m-3-3)
(m-3-3) edge node[auto] {$\A$} (m-4-3)
(m-4-3) edge (m-5-3)
(m-5-3) edge (m-6-3)
;
\path[->, amph]
(m-2-4) edge (m-3-4)
(m-3-4) edge node[auto] {$\pline$} (m-4-4)
(m-4-4) edge (m-5-4)
(m-5-4) edge (m-6-4)
;
\end{tikzpicture}
\end{align}
\begin{align}
	\label{diag:alphaored}
	\begin{tikzpicture}[ineq]
		\matrix(m)[smallcommdiag]
		{ \& 0 \& 0 \& 0 \& \\
		0 \& \ker\E\ored \& \ker\E \& \diagdim{\coim\prline}[\nildef[1]] \& 0\\
		0 \& \Mv\ored \& \Mv \& \coim\pline \& 0 \\
		0 \& \Vv\ored[2] \& \Vv\ored \& \coker\E\ored \& 0 \\
		 \& 0 \& 0 \& 0 \&  \\};
		\path[->]
		(m-1-2) edge (m-2-2)
		(m-2-2) edge (m-3-2)
		(m-3-2) edge node[auto] {$\E\ored$} (m-4-2)
		(m-4-2) edge (m-5-2)
		;
		\path[->]
		(m-1-3) edge (m-2-3)
		(m-2-3) edge (m-3-3)
		(m-3-3) edge node[auto] {$\E$} (m-4-3)
		(m-4-3) edge (m-5-3)
		;
		\path[->, amph]
		(m-1-4) edge (m-2-4)
		(m-2-4) edge (m-3-4)
		(m-3-4) edge node[auto] {$\pline[\E]$} (m-4-4)
		(m-4-4) edge (m-5-4)
		;
		% \path[->]
		% (m-2-5) edge (m-3-5)
		% (m-3-5) edge (m-4-5)
		% (m-4-5) edge (m-5-5)
		% ;
		\path[->, amph]
		(m-2-1) edge (m-2-2)
		(m-2-2) edge (m-2-3)
		(m-2-3) edge  (m-2-4)
		(m-2-4) edge (m-2-5)
		;
		\path[->]
		(m-3-1) edge (m-3-2)
		(m-3-2) edge (m-3-3)
		(m-3-3) edge  (m-3-4)
		(m-3-4) edge (m-3-5)
		;
		\path[->]
		(m-4-1) edge (m-4-2)
		(m-4-2) edge (m-4-3)
		(m-4-3) edge (m-4-4)
		(m-4-4) edge (m-4-5)
		;
	\end{tikzpicture}
\end{align}
\end{theorem}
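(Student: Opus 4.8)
The plan is to read both diagrams as the output of the snake lemma applied to a ladder of two short exact sequences. In each case the two middle rows are the exact rows of a reduction diagram from the preceding Proposition, the columns are kernel--image--cokernel sequences of morphisms (which are exact in any abelian category), and the snake lemma splices these into the displayed long exact sequence. The whole content is then that the six--term snake sequence degenerates into the shorter displayed sequences because one of the three vertical maps is special: a monomorphism for \eqref{diag:betaored}, and a pair of epimorphisms for \eqref{diag:alphaored}.

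For \eqref{diag:betaored} I would take the ladder to be the $\A$--part of the observation--reduction diagram \eqref{eq:diagdefored}, namely the exact rows $0\to\Mv\ored\to\Mv\to\coim\pline\to0$ and $0\to\im\E\to\Vv\to\coker\E\to0$, with vertical maps $\A\ored$, $\A$, and the induced monomorphism $\coim\pline\hookrightarrow\coker\E$. Commutativity of this ladder is exactly what the Proposition establishes. The snake lemma then produces an exact sequence $0\to\ker\A\ored\to\ker\A\to K\to\coker\A\ored\to\coker\A\to\coker\pline\to0$, where $K$ is the kernel of the monomorphism $\coim\pline\hookrightarrow\coker\E$ and $\coker\pline$ is its cokernel. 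Since that map is mono, $K=0$, the connecting morphism vanishes, and the six--term sequence splits into the two bold horizontal sequences $0\to\ker\A\ored\to\ker\A\to0$ and $0\to\coker\A\ored\to\coker\A\to\coker\pline\to0$, while the two inner columns are the kernel--cokernel sequences of $\A\ored$ and $\A$ and the outer column $0\to\coim\pline\to\coker\E\to\coker\pline\to0$ is the defining sequence of $\coker\pline$. The only routine checks are the commutativity and the identification of the cokernel of $\coim\pline\hookrightarrow\coker\E$ with $\coker\pline$.

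For \eqref{diag:alphaored} I would instead use the ladder whose top row is again $0\to\Mv\ored\to\Mv\to\coim\pline\to0$ and whose bottom row is the reduction sequence of the once--reduced system, $0\to\Vv\ored[2]\to\Vv\ored\to\coker\E\ored\to0$ (recall $\Vv\ored=\im\E$ and $\Vv\ored[2]=\im\E\ored$). The vertical maps are the corestrictions $\E\ored\colon\Mv\ored\twoheadrightarrow\im\E\ored$ and $\E\colon\Mv\twoheadrightarrow\im\E$, both epimorphisms, together with the induced map $\pline[\E]\colon\coim\pline\to\coker\E\ored$. Because the two outer verticals are epi, their cokernels vanish, so the snake sequence collapses to $0\to\ker\E\ored\to\ker\E\to\ker\pline[\E]\to0$ together with $\coker\pline[\E]=0$. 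This is precisely the exactness of the bold second row $0\to\ker\E\ored\to\ker\E\to\coim\prline\to0$ and of the bold fourth column $0\to\coim\prline\to\coim\pline\to\coker\E\ored\to0$, provided one identifies $\ker\pline[\E]$ with $\coim\prline$.

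That identification is where \autoref{prop:kercoker} enters, and I expect it to be the only non--formal step. The induced snake map $\ker\E\to\ker\pline[\E]$ is the restriction to $\ker\E$ of the quotient $\Mv\to\coim\pline$; its kernel is $\ker\E\cap\Mv\ored=\ker\E\cap\ker\pline=\ker\prline$, which equals $\ker\E\ored$ by \autoref{prop:kercoker}. Hence its image is $\ker\E/\ker\prline=\coim\prline$, and snake exactness makes it surject onto $\ker\pline[\E]$, giving $\ker\pline[\E]=\coim\prline$ as claimed. The main obstacle is thus the clean, element--free handling of this induced morphism $\pline[\E]$ and of its kernel; everything else is bookkeeping with the snake lemma. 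The dual diagrams promised in the appendix then follow by running the same argument for the control reduction, or formally by passing to the opposite category.
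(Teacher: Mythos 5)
Your proposal is correct and is essentially the paper's own argument: the paper's ``completion'' of the ladder \eqref{eq:diagdefored} by taking kernels and cokernels of the vertical maps, with exactness checked by diagram chasing, is exactly the snake lemma you invoke, degenerating because the outer vertical is mono in \eqref{diag:betaored} and the corestricted verticals are epi in \eqref{diag:alphaored}. Your identification of $\ker\pline[\E]$ with $\coim\prline$ via \autoref{prop:kercoker} is also the same step the paper uses to conclude.
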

\begin{proof}
\begin{enumerate}
	% \item
% 	The fact that $\ker\E\ored$ is a consequence of the following diagram, where the second line is exact by mere definition of $\Mv\ored$.
% Similarly, the exactness of the remaining part of the sequence is proved using the diagram dual to the previous one:
% \begin{align}
% \begin{tikzpicture}[ineq]
% \matrix(m) [commdiag, ]
% { \& \ker\E \& \Vv \& \Vv\cred \& 0\\
%  \& \ker\E \& \coker\E \& \coker{E\cred} \& 0 \\};
% \path[->]
% (m-1-2) edge node[auto] {$\rline$} (m-1-3)
% (m-2-2) edge node[auto] {$\prline$} (m-2-3)
% (m-1-2) edge [diagequal] (m-2-2)
% (m-1-3) edge  (m-2-3)
% (m-1-3) edge  (m-1-4)
% (m-2-3) edge  (m-2-4)
% (m-1-4) edge  (m-2-4)
% % (m-1-1) edge (m-1-2)
% % (m-2-1) edge (m-2-2)
% (m-1-4) edge (m-1-5)
% (m-2-4) edge (m-2-5);
% \end{tikzpicture}
% \end{align}
% 
\item
% The other relations are a consequence of the following diagram, which are exact by definition of the defects $\beta\ored$ and $\beta\cred$.
The diagram~\eqref{diag:betaored} is the completion of the diagram~\eqref{eq:diagdefored}.
The horizontal bold arrows are uniquely defined by the completion.
The lower right horizontal arrow is automatically a surjection, and the upper left horizontal arrow is automatically an injection.
The remaining arrow is the lower left horizontal one.
One shows by diagram chasing that it is an injection, using that the upper right horizontal arrow is a surjection.
% The last row gives the identity \eqref{diag:betaored}.
% The first row gives the identity \eqref{eq:kerAequal}.
\item
In the diagram~\eqref{diag:alphaored}, the last two rows are the definitions of $\Mv\ored$ and $\Vv\ored[2]$ in~\eqref{eq:diagdefored}.
Recall that $\Vv\ored = \im\E$, so the vertical arrows are surjections.
This defines a surjective operator $\pline[\E]$ from $\coim\pline$ to $\coker\E\ored$.
The first row, with $\ker\pline[\E]$ replacing $\coim\prline$, is then the completion of that diagram by taking the kernels of the vertical maps.
Note again that the upper left horizontal arrow is automatically an injection, and the upper right horizontal arrow is a surjection by diagram chasing, using that the lower left horizontal arrow is an injection.
This gives the identification $\ker\pline[\E] = \ker\E / \ker\E\ored$.
We conclude by observing that, due to \autoref{prop:kercoker}, we have $\coim\prline = \ker\E / \ker\prline = \ker\E/\ker\E\ored$.
% $\coim\prline = \ker\E/\ker\E\ored$.
% It is a consequence of the following commuting diagram, as, by definition, $\Mv\ored = \ker\pline$:
% \begin{align}
% \begin{tikzpicture}[ineq]
% \matrix(m) [commdiag, ]
% {0 \& \ker\E\ored \& \ker\E \& \coker \E \\
% 0 \& \Mv\ored \& \Mv \& \coker{E}  \\};
% \path[->]
% (m-1-2) edge  (m-1-3)
% (m-2-2) edge  (m-2-3)
% (m-1-2) edge  (m-2-2)
% (m-1-3) edge  (m-2-3)
% (m-1-3) edge node[auto] {$\prline$} (m-1-4)
% (m-2-3) edge node[auto] {$\pline$} (m-2-4)
% (m-1-4) edge[diagequal]  (m-2-4)
% (m-1-1) edge (m-1-2)
% (m-2-1) edge (m-2-2)
% % (m-1-4) edge (m-1-5)
% ;
% \end{tikzpicture}
% \end{align}
\end{enumerate}
\end{proof}

The following result is essentially the counterpart of the Kronecker decomposition theorem in vector spaces.
We indicate the precise relation in \autoref{sec:veckronecker}
\begin{proposition}
\label{prop:genkronecker}
	The definition~\eqref{eq:diagdefored} induces a decomposition of $\Vv$ into $\Vv\ored$ and $\coker \E$.
	The space $\Vv\ored$ is then in turn decomposed into $\Vv\ored[2]$ and $\coker\E\ored$ by the same device.
	The space $\coker \E$ is decomposed as
\begin{align}
	\label{diag:decompcokerE}
\begin{tikzpicture}[ineq, ampersand replacement=\&]
\matrix(m) [commdiag,]
{
	0 \& \im\pline \& \coker\E \& \diagdim{\coker\pline}[\beta\ored] \& 0\\
};
\path[->]
(m-1-1) edge (m-1-2)
(m-1-2) edge (m-1-3)
(m-1-3) edge  (m-1-4)
(m-1-4) edge (m-1-5)
;
\end{tikzpicture}
\end{align}

On the other hand, the definition~\eqref{eq:diagdefored} defines a decomposition of $\Mv$ into $\Mv\ored$ and $\coim\pline$.
The space $\coim\pline$ is further decomposed into
\begin{align}
	\label{diag:decompcoimpline}
\begin{tikzpicture}[ineq, ampersand replacement=\&]
\matrix(m) [commdiag,]
{
	0 \& \diagdim{\coim\prline}[\nildef\ored] \& \coim\pline \& \coim\pline[\E] \& 0\\
};
\path[->]
(m-1-1) edge (m-1-2)
(m-1-2) edge (m-1-3)
(m-1-3) edge  (m-1-4)
(m-1-4) edge (m-1-5)
;
\end{tikzpicture}
\end{align}

Moreover, $\pline[\E]$ is a bijection from $\coim\pline[\E]$ to $\coker\E\ored$, and $\pline$ is a bijection from $\coim\pline$ to $\im\pline$.
Note that $\pline$ is zero after projection on $\coker\pline$, and $\pline[\E]$ is zero when restricted to $\coim\prline$.

\end{proposition}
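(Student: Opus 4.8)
The plan is to read each assertion directly off the exact diagrams of \autoref{prop:kronecker}, treating the whole statement as a repackaging of results already in hand together with two standard abelian-category facts recalled above: for every morphism $f\colon X\to Y$ the canonical map $\coim f\to\im f$ is an isomorphism, and the sequences $0\to\ker f\to X\to\coim f\to 0$ and $0\to\im f\to Y\to\coker f\to 0$ are exact.

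First I would dispose of the two outer decompositions, which are simply the rows of \eqref{eq:diagdefored}. Since $\Vv\ored=\im\E$, the bottom row $0\to\Vv\ored\to\Vv\to\coker\E\to 0$ is exact and gives the decomposition of $\Vv$ into $\Vv\ored$ and $\coker\E$; dually, since $\Mv\ored=\ker\pline$, the top row $0\to\Mv\ored\to\Mv\to\coim\pline\to 0$ gives the decomposition of $\Mv$ into $\Mv\ored$ and $\coim\pline$. Applying the observation reduction once more, i.e.\ to the system $(\E\ored,\A\ored)$, the corresponding bottom row is $0\to\Vv\ored[2]\to\Vv\ored\to\coker\E\ored\to 0$ with $\Vv\ored[2]=\im\E\ored$; this is precisely the fourth row of \eqref{diag:alphaored} and yields the decomposition of $\Vv\ored$ ``by the same device.''

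Next come the two finer decompositions. Sequence \eqref{diag:decompcokerE} is the canonical image--cokernel factorization of $\pline\colon\Mv\to\coker\E$: by definition $\coker\pline=\coker\E/\im\pline$, so $0\to\im\pline\to\coker\E\to\coker\pline\to 0$ is exact. For sequence \eqref{diag:decompcoimpline} I would take the coimage exact sequence $0\to\ker\pline[\E]\to\coim\pline\to\coim\pline[\E]\to 0$ of the morphism $\pline[\E]\colon\coim\pline\to\coker\E\ored$ furnished by \autoref{prop:kronecker}, and then substitute the identification $\ker\pline[\E]=\coim\prline$. This last identification is the only step carrying genuine content, and it is exactly what \autoref{prop:kronecker} proves (the fourth column of \eqref{diag:alphaored}), resting in turn on $\coim\prline=\ker\E/\ker\E\ored$ via \autoref{prop:kercoker}.

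Finally, the bijection and vanishing claims are immediate. That $\pline$ is a bijection from $\coim\pline$ onto $\im\pline$ is the rank--nullity isomorphism applied to $\pline$; likewise $\pline[\E]$ induces an isomorphism $\coim\pline[\E]\cong\im\pline[\E]$, and since \autoref{prop:kronecker} asserts that $\pline[\E]$ is surjective onto $\coker\E\ored$ we have $\im\pline[\E]=\coker\E\ored$, giving the second bijection. The composite of $\pline$ with the projection $\coker\E\to\coker\pline$ vanishes by the very definition of $\coker\pline$ as the cokernel of $\pline$, and the restriction of $\pline[\E]$ to $\coim\prline=\ker\pline[\E]$ vanishes by definition of the kernel. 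I do not expect any real obstacle: every nontrivial identification, above all $\ker\pline[\E]=\coim\prline$, has already been carried out in \autoref{prop:kronecker} and \autoref{prop:kercoker}, so the task is purely one of reindexing and assembling the established exact sequences in the right order.
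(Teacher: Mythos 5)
Your proof is correct and follows essentially the same route as the paper: both read the outer decompositions off the rows of~\eqref{eq:diagdefored}, obtain~\eqref{diag:decompcokerE} and~\eqref{diag:decompcoimpline} from the last columns of~\eqref{diag:betaored} and~\eqref{diag:alphaored} (equivalently, the canonical image--cokernel and kernel--coimage sequences of $\pline$ and $\pline[\E]$), and use the coimage--image isomorphism together with the identification $\ker\pline[\E]=\coim\prline$ from \autoref{prop:kronecker}. No gaps.
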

\begin{proof}
	The exactness of~\eqref{diag:decompcokerE} follows the last column of~\eqref{diag:betaored}, and that $\pline$ is an isomorphism between $\coim\pline$ and $\im\pline$.
	The exactness of~\eqref{diag:decompcoimpline} follows from the last column of~\eqref{diag:alphaored}, and that $\pline[\E]$ is an isomorphism between $\coim\pline[\E]$ and $\im\pline[\E]$, which, by~\eqref{diag:decompcoimpline} is $\coker\E\ored$.
\end{proof}

Of course, \autoref{prop:genkronecker} also has a dual version.

\section{The Reductions Commute}
\label{sec:commutativity}

% As observed in \cite{linpdae}, the two reduction procedures commute.
We now establish that the two reduction procedures commute.

% \begin{figure}
	% \centering
\begin{proposition}
\label{prop:commutativity}
	The observation- and control-reduction procedures commute, in the sense that the spaces $(\Mv\ored)\cred$ and $(\Vv\ored)\cred$ are canonically isomorphic to the spaces $(\Mv\cred)\ored$ and $(\Vv\cred)\ored$.
	We denote the first space as $\Mv\ored\cred$ and the second $\Vv\ored\cred$.

	Moreover, the following two diagrams commute.
	The operator $\A$ sends the first diagram to the second one (in the sense that it preserves the commutations).
	\begin{subequations}
		\begin{align}
\ninediag{\ker\E\ored \& \ker\E \& \diagdim{\coim\prline}[\nildef]}%
{\Mv\ored \& \Mv \& \coim\pline}%
{\Mv\cred\ored \& \Mv\cred \& \coim\pline[\A\cred]}
\end{align}
\begin{align}
\ninediag{\im\rline[\A\ored] \& \im\rline \& \diagdim{\im\prline}[\nildef]}%
{\Vv\ored \& \Vv \& \coker\E}%
{\Vv\ored\cred \& \Vv\cred \& \coker\E\cred}
\end{align}
\end{subequations}
\end{proposition}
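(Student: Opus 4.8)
The plan is to recognize each of the two $3\times 3$ diagrams as the canonical diagram attached to a pair of subobjects of a fixed object, and then to extract the two iterated reductions from the two ways of running along its border. For the first diagram the ambient object is $\Mv$, with the two subobjects $\Mv\ored = \ker\pline$ and $\ker\E$; for the second it is $\Vv$, with the two subobjects $\Vv\ored = \im\E$ and $\im\rline$. In the first diagram the middle row is the defining sequence of the observation reduction~\eqref{eq:diagdefored}, the middle column is the defining sequence of the control reduction~\eqref{eq:diagdefcred} (recall $\Mv\cred = \coim\E$, so this column reads $0 \to \ker\E \to \Mv \to \Mv\cred \to 0$), and the top row is the exact sequence of \autoref{prop:kercoker}. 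First I would pin down the top-left corner: the proof of \autoref{prop:kercoker} gives $\ker\E\ored = \ker\E \cap \Mv\ored$, so this corner is exactly the intersection of the two subobjects.

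Once the intersection is identified, the whole array is the standard diagram of two subobjects, their quotients, and the double quotient, and the exactness of all of its rows and columns is a restatement of the isomorphism theorems, which hold in any abelian category; equivalently one completes the central cross by taking induced kernels and cokernels and applies the nine lemma. Commutativity of every square is automatic from these constructions. In particular this produces the bottom row $0 \to \Mv\cred\ored \to \Mv\cred \to \coim\pline[\A\cred] \to 0$ and the right column $0 \to \coim\prline \to \coim\pline \to \coim\pline[\A\cred] \to 0$, with the bottom-right corner being the quotient of $\Mv$ by the join $\Mv\ored + \ker\E$.

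The commutativity of the reductions is then the statement that the bottom-left corner carries two descriptions. Running down the last column it is $\Mv\ored/\ker\E\ored = \coim\E\ored = (\Mv\ored)\cred$; running along the last row it is the observation reduction $(\Mv\cred)\ored$ of the control-reduced system. The hard part, and the only genuinely computational step, is to check that these agree, i.e. that observation-reducing the control-reduced system returns precisely the image of $\Mv\ored$ in $\Mv\cred$. Concretely I would verify $\ker\pline[\A\cred] = (\Mv\ored + \ker\E)/\ker\E$ inside $\Mv\cred = \Mv/\ker\E$: using $\im\rline = \A(\ker\E)$ and $\im(\E\cred) = (\im\E + \im\rline)/\im\rline$, an element $x$ of $\Mv$ maps into $\ker\pline[\A\cred]$ iff $\A x \in \im\E + \A(\ker\E)$, iff $\A(x-k) \in \im\E$ for some $k \in \ker\E$, iff $x \in \A\inv(\im\E) + \ker\E = \Mv\ored + \ker\E$, the last equality because $\Mv\ored = \ker\pline = \A\inv(\im\E)$. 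This chase is made rigorous through the universal properties of the kernels, cokernels and images in the diagram, or via generalized elements.

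Finally, for the $\A$-compatibility and the second diagram: since $\rline$ is the restriction of $\A$ to $\ker\E$, the morphism $\A$ sends $\ker\E$ into $\im\rline$, and since $\Mv\ored = \A\inv(\im\E)$ it sends $\Mv\ored$ into $\Vv\ored = \im\E$. Thus $\A$ carries the subobject pair of $\Mv$ to the subobject pair of $\Vv$, and by functoriality of intersection, join and quotient it induces a morphism from the first $3\times 3$ diagram to the second, preserving all commutations. The second diagram is the two-subobject diagram for $\im\E$ and $\im\rline$ in $\Vv$; its top-left corner is $\im\rline[\A\ored] = \im\rline \cap \im\E$ (by the same computation as above, carried out on the codomain side, or by duality), and reading its bottom-left corner in the two ways gives the companion isomorphism $(\Vv\ored)\cred \cong (\Vv\cred)\ored$. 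I expect the identification of the corner as an intersection of subobjects, together with the verification of the previous paragraph, to be the main obstacle; the remainder is a transcription of the isomorphism theorems into diagram form.
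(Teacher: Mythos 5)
Your proof is correct and is built on the same $3\times 3$ exact diagram as the paper's, but you assemble and justify that diagram by a genuinely different (and more explicit) route. The paper obtains the third row by completing the first two rows coming from~\eqref{eq:diagdefored} and~\eqref{eq:diagdefcred}, kills the leftover corner by a diagram chase, quotes \autoref{prop:kercoker} for the first row, invokes duality for the last column, and then simply asserts that the resulting last row ``defines'' $\Mv\cred\ored$. You instead identify the whole array as the canonical two-subobject diagram of $\Mv\ored=\ker\pline$ and $\ker\E$ inside $\Mv$ (respectively $\im\E$ and $\im\rline$ inside $\Vv$), and you supply the one computation the paper's final sentence compresses away: the verification that $\ker\pline[\A\cred]=(\Mv\ored+\ker\E)/\ker\E$, so that the bottom-left corner is simultaneously $\Mv\ored/\ker\E\ored=\coim\E\ored=(\Mv\ored)\cred$ (read down the \emph{first} column --- you wrote ``last column'', a harmless slip) and $\ker\pline[\A\cred]=(\Mv\cred)\ored$ (read along the bottom row), the two being identified by the second isomorphism theorem. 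Your element chase for that corner, and the dual one $\im\rline[\A\ored]=\im\E\cap\im\rline$, are valid and transfer to a general abelian category by exactly the embedding argument the paper itself uses in the proof of \autoref{prop:seqE}; your functoriality argument for the $\A$-compatibility matches the paper's remark that the restrictions of $\A$ carry the first diagram to the second. What your version buys is that the key identification is actually proved rather than asserted; what the paper's version buys is that it never needs to name joins or intersections of subobjects, working purely with completions, universal properties and duality.
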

\begin{proof}
	We have already established the exactness of the first row of the first diagram.
	% $0\to\ker\E\ored\to\ker\E\to\coim\prline\to 0$.
	The operators $\rline[\A\ored]$, $\rline[\A]$ and $\prline$ send this sequence to the first row of the second diagram, which establishes its exactness.
	% $0\to\im\A\ored\to\im\rline\to\im\prline\to 0$.
	We obtain by duality the exactness of the last column of the first diagram.

	Consider now the first two rows of the first diagram.
	The completion gives the last row as $\star \to \Mv\ored[1]\cred[\ 1] \to \Mv\cred \to \coim\pline[\A\cred] \to 0$.
	As usual, we use that the upper right horizontal arrow is a surjection to establish that $\star = 0$.
	Now, the last row defines the space $\Mv\cred[1]\ored[\ 1]$.
\end{proof}

\section{Control, Observation and Nilpotency Spaces}
\label{sec:defectspaces}

We also have the following fundamental result about the defective spaces $\ker\rline$ and $\coker\pline$.
This ensures that the observation reduction $(\E,\A)\to (\E\ored,\A\ored)$ has no effect on the sequence of spaces $\ker\rline[\A\ored[n]]$.
\begin{lemma}
\label{prop:defred}
	The spaces $\ker\rline$ and $\ker\rline[\A\ored]$ are isomorphic.
\end{lemma}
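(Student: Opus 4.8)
The plan is to prove something slightly stronger than an abstract isomorphism: I would show that $\ker\rline$ and $\ker\rline[\A\ored]$ coincide as subobjects of $\ker\E$, so that the isomorphism is the canonical one. Throughout I work with subobjects and invoke only the universal property of kernels together with the fact that composing with a monomorphism does not change a kernel; no element chasing is needed.

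First I would identify $\rline[\A\ored]$ with $\rline$ on their common domain. By definition $\rline[\A\ored]$ is $\A\ored$ restricted to $\ker\E\ored$ and landing in $\Vv\ored$, while $\rline$ is $\A$ restricted to $\ker\E$. The left-hand square of~\eqref{eq:diagdefored} commutes, so post-composing $\rline[\A\ored]$ with the inclusion $\Vv\ored\hookrightarrow\Vv$ equals $\A$ restricted to $\ker\E\ored$; pre-composing $\rline$ with the inclusion $j\colon\ker\E\ored\hookrightarrow\ker\E$ furnished by~\autoref{prop:kercoker} yields the same map. Hence the two composites agree, and since $\Vv\ored\hookrightarrow\Vv$ is a monomorphism, taking kernels gives $\ker\rline[\A\ored]=\ker(\rline\circ j)$.

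Next I would express this kernel as an intersection. Because $j$ is a monomorphism, $\ker(\rline\circ j)$ is exactly the pullback of the subobject $\ker\rline\hookrightarrow\ker\E$ along $j$, that is, the intersection $\ker\rline\cap\ker\E\ored$ formed inside $\ker\E$. It therefore remains to prove that $\ker\rline$ is contained in $\ker\E\ored$ as subobjects of $\ker\E$, since then the intersection collapses to $\ker\rline$ itself. This containment follows from the factorization $\prline=\pi\circ\rline$, where $\pi\colon\Vv\to\coker\E$ is the projection: the composite of $\ker\rline\hookrightarrow\ker\E$ with $\prline$ is $\pi$ applied to its composite with $\rline$, which is zero, so $\ker\rline\hookrightarrow\ker\E$ factors through $\ker\prline$; and by~\autoref{prop:kercoker} we have $\ker\prline=\ker\E\ored$.

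Combining the three steps gives $\ker\rline[\A\ored]=\ker\rline\cap\ker\E\ored=\ker\rline$. I expect the only delicate point to be stating the pullback/intersection identity diagrammatically rather than via elements, but this is a standard property of kernels along monomorphisms. Conceptually everything is driven by the inclusion $\ker\A\subseteq\ker\pline$, which makes the extra condition cutting out $\ker\rline[\A\ored]$ inside $\ker\rline$ redundant.
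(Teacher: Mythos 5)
Your proof is correct, and it takes a genuinely different route from the paper's. The paper derives the lemma from machinery established later in the development: it places $\ker\rline$ and $\ker\rline[\A\ored]$ in the short exact sequences $0\to\ker\rline\to\ker\A\to\ker\A\cred\to 0$ (first row of~\eqref{diag:betacred}) and $0\to\ker\rline[\A\ored]\to\ker\A\ored\to\ker(\A\ored)\cred\to 0$, then invokes the commutativity of the reductions (\autoref{prop:commutativity}) together with the first row of~\eqref{diag:betaored} to identify $\ker\A\ored=\ker\A$ and $\ker(\A\ored)\cred=\ker(\A\cred)\ored=\ker\A\cred$, so that both defect spaces appear as kernels of the same induced morphism. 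You instead argue directly from the defining diagram~\eqref{eq:diagdefored} and \autoref{prop:kercoker}: you identify $\rline[\A\ored]$ with $\rline$ pre-composed with the inclusion $\ker\E\ored\hookrightarrow\ker\E$ (using that the ambient inclusion $\Vv\ored\hookrightarrow\Vv$ is monic), reduce the claim to computing the intersection $\ker\rline\cap\ker\E\ored$ inside $\ker\E$, and collapse that intersection via the factorization $\prline=\pi\circ\rline$, which gives $\ker\rline\subseteq\ker\prline=\ker\E\ored$. Every step is sound, and there is no circularity, since both ingredients you use precede the lemma. Your approach buys elementarity and a stronger conclusion: the two kernels coincide as subobjects of $\ker\E$, so the isomorphism is canonical, and the lemma could even be stated and proved before \autoref{sec:commutativity}. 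The paper's approach buys economy and context: it reuses diagrams already in place and exhibits $\ker\rline$ as measuring the gap between $\ker\A$ and $\ker\A\cred$, which is the form in which the defect spaces are exploited in the rest of \autoref{sec:defectspaces}.
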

\begin{proof}
	From the first line of~\eqref{diag:betacred} we have
\begin{align}
\begin{tikzpicture}[ineq, ampersand replacement=\&]
\matrix(m) [commdiag,]
{
	0 \& \ker\rline \& \ker\A \& \ker\A\cred \& 0\\
};
\path[->]
 (m-1-1) edge (m-1-2)
 (m-1-2) edge (m-1-3)
 (m-1-3) edge (m-1-4)
 (m-1-4) edge (m-1-5)
;
\end{tikzpicture}
\end{align}
By simply considering the reduced system $(\E\ored,\A\ored)$ we also obtain
\begin{align}
\begin{tikzpicture}[ineq, ampersand replacement=\&]
\matrix(m) [commdiag,]
{
	0 \& \ker\rline[\A\ored] \& \ker\A\ored \& \ker\paren{\A\ored}\cred \& 0\\
};
\path[->]
 (m-1-1) edge (m-1-2)
 (m-1-2) edge (m-1-3)
 (m-1-3) edge (m-1-4)
 (m-1-4) edge (m-1-5)
;
\end{tikzpicture}
\end{align}
We may now apply \autoref{prop:commutativity} to obtain that $\paren{\A\ored}\cred \equiv \paren{\A\cred}\ored$.
We now read the first row of~\eqref{diag:betaored} to obtain that $\ker\A\ored = \ker \A$ and that $\ker\paren{\A\cred}\ored = \ker\A\cred$.
\end{proof}

We have the following simplification:
\begin{proposition}
	The spaces $\coim \prline[\A\cred]$ and $\im\prline[\A\ored]$ are isomorphic.
\end{proposition}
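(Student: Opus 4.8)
The plan is to reduce the statement to an equality of two differently-computed copies of the ``first'' nilpotency defect, and then settle that equality using the commutativity of the reductions. Since $\prline[\A\ored]$ and $\prline[\A\cred]$ are morphisms in an abelian category, the rank--nullity isomorphism $\coim\prline[\A\ored]\cong\im\prline[\A\ored]$ (and likewise for $\A\cred$) lets me trade the mixed $\coim$/$\im$ comparison for a comparison of the two coimages; it therefore suffices to prove $\coim\prline[\A\ored]\cong\coim\prline[\A\cred]$. Conceptually this is the assertion, visible in \autoref{fig:defects}, that an observation reduction and a control reduction each ``consume'' the same nilpotency defect. The subtlety, and the reason this is harder than \autoref{prop:defred}, is that unlike the control defect $\ker\rline$ (which is literally preserved, because the first row of the diagram forces $\ker\A\ored=\ker\A$), the nilpotency defect is not preserved but \emph{shifted}, so I must compare its value after two genuinely different reductions.

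First I would make both defects explicit as kernel subquotients. Applying \autoref{prop:kercoker} to the observation-reduced system $(\E\ored,\A\ored)$ gives $\ker\prline[\A\ored]=\ker\E\ored[2]$, hence $\coim\prline[\A\ored]\cong\ker\E\ored/\ker\E\ored[2]$. Applying the same lemma to the control-reduced system $(\E\cred,\A\cred)$ gives $\ker\prline[\A\cred]=\ker\paren{\E\cred}\ored$, hence $\coim\prline[\A\cred]\cong\ker\E\cred/\ker\paren{\E\cred}\ored$. By \autoref{prop:commutativity} the double reduction is order-independent, so $\paren{\E\cred}\ored$ may be identified with $\paren{\E\ored}\cred$; this is exactly what allows the two expressions to be compared inside one coherent diagram rather than across two unrelated towers of reductions.

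The crux is then the isomorphism $\ker\E\ored/\ker\E\ored[2]\cong\ker\E\cred/\ker\paren{\E\cred}\ored$, which I would obtain in the spirit of the proof of \autoref{prop:defred}: build the nine-term commutativity diagram of \autoref{prop:commutativity} one level up, i.e.\ for the once-reduced systems, so that its top row records $0\to\ker\E\ored[2]\to\ker\E\ored\to\coim\prline[\A\ored]\to 0$ while the commuting columns connect it to the control reduction of $(\E\ored,\A\ored)$, which by commutativity is the observation reduction of $(\E\cred,\A\cred)$. Reading off the matching exact row and column, by the same diagram-chase that produced the exactness statements of \autoref{prop:kronecker}, identifies both subquotients with the single nilpotency object of the doubly-reduced system, and the two rank--nullity isomorphisms then close the loop to $\coim\prline[\A\cred]\cong\im\prline[\A\ored]$. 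The main obstacle I expect is bookkeeping the index shift cleanly: one must check that the canonical maps $\ker\E\ored[2]\to\ker\E\ored$ and $\ker\paren{\E\cred}\ored\to\ker\E\cred$ are genuinely the two edges of one commuting square arising from \autoref{prop:commutativity}, rather than merely abstractly isomorphic, so that the resulting isomorphism is the natural one the proposition tacitly asserts.
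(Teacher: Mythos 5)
Your opening reduction is legitimate, and half of it coincides with how the paper's own proof begins: applying \autoref{prop:kercoker} (equivalently, the first row of~\eqref{diag:alphaored}) to the reduced systems together with \autoref{prop:commutativity} gives $\coim\prline[\A\ored]\cong\ker\E\ored/\ker\E\ored[2]$ and $\coim\prline[\A\cred]\cong\ker\E\cred/\ker\E\cred\ored$. The gap is at your declared crux. The isomorphism $\ker\E\ored/\ker\E\ored[2]\cong\ker\E\cred/\ker\E\cred\ored$ is true, but it cannot be obtained by ``reading off the matching exact row and column'' of the nine-term diagrams of \autoref{prop:commutativity}, at whatever level you build them. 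In the diagram for $(\E\ored,\A\ored)$ the object $\coim\prline[\A\ored]$ is connected by morphisms only to $\ker\E\ored$ and to $\coim\pline[\A\ored]$; in the diagram for $(\E\cred,\A\cred)$ the object $\coim\prline[\A\cred]$ is connected only to $\ker\E\cred$ and to $\coim\pline[\A\cred]$. No composite of arrows in those diagrams links the two subquotients, which live on incomparable objects ($\ker\E\ored\subset\ker\E$ versus $\ker\E\cred\subset\coim\E$). Moreover your phrase ``the single nilpotency object of the doubly-reduced system'' points at the wrong object: that would be $\coim\prline[\A\ored\cred]$, which (by the very circle of results being proved) is the \emph{next} defect, not the one under discussion.

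What is genuinely missing is a bridge between the domain side and the codomain side of $\E$, and this is precisely the paper's \autoref{prop:seqE}: the morphism induced by $\E$ from $\ker\E\cred$ to $\coker\E\ored$ fits into an exact sequence $0\to\ker\E\ored\cred\to\ker\E\cred\to\coker\E\ored\to\coker\E\ored\cred\to0$, whose middle image is simultaneously $\ker\E\cred/\ker\E\ored\cred\cong\coim\prline[\A\cred]$ and the kernel of $\coker\E\ored\to\coker\E\ored\cred$, which by the last row of~\eqref{diag:alphacred} applied to $(\E\ored,\A\ored)$ is $\im\prline[\A\ored]$. This lemma is not a formal consequence of the exactness statements already available; the paper proves it by an element chase after embedding into a category of abelian groups. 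Note also that your rank--nullity move, which converts both defects into kernel-side subquotients, actually obscures the bridge: the natural comparison runs from the kernel side ($\ker\E\cred$) to the cokernel side ($\coker\E\ored$) through $\E$, which is why the paper deliberately keeps one defect in the form $\im\prline[\A\ored]$. To close the gap you would have to construct this connecting morphism and prove exactness of the four-term sequence, i.e.\ prove \autoref{prop:seqE} --- at which point you are reproducing the paper's argument.
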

\begin{proof}
From the first row of~\eqref{diag:alphaored}, and using \autoref{prop:commutativity}, we obtain $\coim\prline[\A\cred]\equiv \ker\E\cred/\ker\E\cred\ored$.
Similarly from the last row of~\eqref{diag:alphacred} and using \autoref{prop:commutativity} we obtain $\im\prline[\A\ored] \equiv \coker\E\cred/\coker\E\cred\ored$.
The result is then a consequence of \autoref{prop:seqE}
\end{proof}

\begin{lemma}
\label{prop:seqE}
	The morphism $\E$ induces the following exact sequence.
\begin{align}
\begin{tikzpicture}[ineq, ampersand replacement=\&]
\matrix(m) [commdiag,]
{
	0 \& \ker\E\ored\cred \& \ker\E\cred \& \coker\E\ored \& \coker\E\ored\cred \& 0\\
};
\path[->]
 (m-1-1) edge (m-1-2)
 (m-1-2) edge (m-1-3)
 (m-1-3) edge (m-1-4)
 (m-1-4) edge (m-1-5)
 (m-1-5) edge (m-1-6)
;
\end{tikzpicture}
\end{align}
\end{lemma}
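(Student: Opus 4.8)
The plan is to realize this four–term sequence as the kernel–cokernel sequence of a single morphism built out of $\E$, and to identify its two ends by invoking the commutativity of the reductions. The key input is the rank–nullity isomorphism recalled in \autoref{red_space_sec}: viewed as a morphism from $\coim\E=\Mv\cred$ to $\im\E=\Vv\ored$, the morphism $\E$ is an isomorphism. I would therefore take the middle map $\ker\E\cred\to\coker\E\ored$ of the sequence to be the composite
\[
\ker\E\cred \hookrightarrow \Mv\cred \xrightarrow{\,\E\,} \Vv\ored \twoheadrightarrow \coker\E\ored,
\]
where the first arrow is the canonical inclusion, the middle arrow is the rank–nullity isomorphism, and the last is the projection. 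The two outer maps are the evident ones: on the left the inclusion $\ker\E\ored\cred\hookrightarrow\ker\E\cred$ (which makes sense once $\ker\E\ored\cred=\ker\E\cred\ored$ is known from \autoref{prop:commutativity}), and on the right the projection $\coker\E\ored\to\coker\E\ored\cred$ induced by the further quotient.

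I would then compute the kernel of the middle map. Since the central arrow is an isomorphism, this kernel is the intersection inside $\Mv\cred$ of $\ker\E\cred$ with the preimage of $\ker(\Vv\ored\to\coker\E\ored)=\im\E\ored$; because $\E$ identifies $\Mv\cred$ with $\im\E$, that preimage is exactly $\Mv\cred\ored$, the domain of the observation reduction of $(\E\cred,\A\cred)$. Hence the kernel is $\ker\E\cred\cap\Mv\cred\ored=\ker(\E\cred\ored)$, which is $\ker\E\ored\cred$ by \autoref{prop:commutativity}; this yields exactness at the two left-hand spots. Dually, for the right half I would trace the image: under the rank–nullity isomorphism the subobject $\ker\E\cred\subseteq\Mv\cred$ is carried to $\Vv\ored\cap\im\rline\subseteq\Vv\ored$, since up to that isomorphism $\E\cred$ is the composite $\Vv\ored=\im\E\hookrightarrow\Vv\twoheadrightarrow\coker\rline=\Vv\cred$. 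Projecting to $\coker\E\ored$ and passing to the cokernel then produces $\Vv\ored/(\im\E\ored+(\Vv\ored\cap\im\rline))$, which should be $\coker\E\ored\cred$.

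The crux, and the one genuinely non-formal step, is the identification $\Vv\ored\cap\im\rline=\im\rline[\A\ored]$, which closes the image computation. I would obtain it from \autoref{prop:kercoker}: that lemma gives $\ker\E\ored=\ker\prline$, and since $\prline$ is $\rline$ followed by the projection $\Vv\to\coker\E$, this reads $\ker\E\ored=\rline^{-1}(\im\E)=\rline^{-1}(\Vv\ored)$. Applying the elementary identity that the image of a preimage equals the image intersected with the subobject gives $\rline(\ker\E\ored)=\Vv\ored\cap\im\rline$, and the left-hand side is by definition $\im\rline[\A\ored]$. With this identity the right-hand cokernel becomes $\Vv\ored/(\im\E\ored+\im\rline[\A\ored])=\coker\E\ored\cred$, completing the sequence. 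I expect the only delicate bookkeeping to be the repeated appeal to \autoref{prop:commutativity} to identify the doubly reduced kernel and cokernel independently of the order of reduction, together with keeping straight the two reduction structures, namely a subobject on the domain side ($\Mv\cred\ored\subseteq\Mv\cred$) and a quotient on the codomain side ($\Vv\ored\twoheadrightarrow\coker\E\ored$).
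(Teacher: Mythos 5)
Your proof is correct, and its core coincides with the paper's: both realize the four-term sequence as the kernel--cokernel sequence of the map $\ker\E\cred\to\coker\E\ored$ induced by $\E$, and both use \autoref{prop:commutativity} to identify the left-hand end with $\ker\E\ored\cred$. The execution differs in two ways. First, the paper argues with elements in the category of abelian groups (licensed by the Freyd--Mitchell embedding, which it cites), whereas you argue element-free with subobjects, preimages, and the rank--nullity isomorphism $\Mv\cred\cong\Vv\ored$; the two styles are interchangeable here. Second, and more substantively, the paper only proves the kernel half explicitly (that $\ker\E\ored\cred=\ker[\E]$) and leaves the cokernel half implicit as the dual statement, while you prove both halves directly, resting the cokernel half on the identity $\Vv\ored\cap\im\rline=\im\rline[\A\ored]$, which you correctly extract from \autoref{prop:kercoker}. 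Your route is longer but makes visible exactly which earlier results each half of the exactness depends on (commutativity at the kernel end, \autoref{prop:kercoker} at the cokernel end); the paper's route buys brevity by exploiting the self-duality of the statement.

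One step needs tightening: you assert that the preimage of $\im\E\ored$ under the rank--nullity isomorphism ``is exactly $\Mv\cred\ored$'' as though this followed from the isomorphism alone. What follows immediately is only that this preimage is the image of $\Mv\ored$ under the canonical projection $\Mv\to\Mv\cred$. Identifying that image with $\ker\pline[\A\cred]=\Mv\cred\ored$ requires an argument involving $\A$ (if $\A x\in\im\E+\im\rline$ then $x$ differs from an element of $\Mv\ored$ by an element of $\ker\E$, and conversely), or equivalently it is read off from the left column and bottom row of the first diagram of \autoref{prop:commutativity}. Since you invoke that proposition in the very next clause anyway, the gap closes at no extra cost, but the citation should cover this identification as well.
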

\begin{proof}
	We give a proof in the category of abelian groups, which automatically translates to any abelian category~\cite{Fr64}.
	We have to show that $\ker\E\ored\cred$ is the kernel of the operator induced by $\E$ in the middle, which we denote by $[\E]$.
	First, notice that
	\begin{align}
		\ker\E\ored\cred = \ker\E\cred\cap \Mv\cred\ored = \ker\E\cred \cap \Mv\ored
		.
	\end{align}
	Next, $\ker[\E] = \setc{x\in\ker\E\cred}{\E x \in \im\E\ored + \ker\E}$.
	We thus obtain $\ker\E\ored\cred \subset \ker[\E]$.
	Suppose on the other hand that $x + \ker\E\in \ker[\E]$.
	It means that there is a $y\in\Mv\ored$ such that $\E x = \E y$, from which we obtain that $\E(x + \ker\E) \in \im\E\ored$.
	This concludes the proof.
	% On the one hand, that kernel is described by
	% \begin{align}
	% 	\ker[\E] = \setc{x\in\\ker\E\cred}{\E x \in \im\E\ored} = \setc{x + \ker\E}{\E x \in \im\rline \quad \E x \in \Vv\ored[2]}
	% \end{align}
	% On the other hand, $\ker\E\cred\ored$ is defined by
	% \begin{align}
	% 	\ker\E\ored\cred = \ker\E\cred \cap \Mv\cred\ored = \ker\E\cred \cap \Mv\ored = 
	% \end{align}
\end{proof}

% If we combine \autoref{prop:commutativity} and \autoref{prop:defred}, we obtain the following remarkable simplification:
We gather our findings on the defective spaces.

\begin{proposition}
\label{prop:defcomm}
	For any integers $n$, $m$, we have (see \autoref{fig:defects}):
	
\begin{align}
	\ker\rline[\A\cred[m]\ored[n]] \equiv \ker\rline[\A\cred[m]]
	\qquad 
	\coker\pline[\A\cred[m]\ored[n]] \equiv \coker\pline[\A\ored[n]]
	\\
	\coim\prline[\A\ored[n]\cred[m]] = \coim\prline[\A\ored[n+m]] = \im\prline[A\cred[n+m]] = \im\prline[\A\ored[n]\cred[m]]
\end{align}
\end{proposition}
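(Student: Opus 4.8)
The three isomorphisms split into two dual families together with one self-dual family, and in every case the strategy is the same: each assertion is a \emph{multi-step} statement, whereas the tools at our disposal (\autoref{prop:defred}, its dual, the preceding Proposition on $\coim\prline$, and the rank-nullity isomorphism recalled earlier) are all \emph{single-step} statements about a generic system. The plan is therefore to bootstrap each single-step result to arbitrary $n$ and $m$ by induction, relying on \autoref{prop:commutativity} both to make the composite reduced systems unambiguous and, for the nilpotency family, to trade a control reduction for an observation reduction.

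For the first isomorphism I fix $m$ and induct on $n$. The base case $n=0$ is trivial. For the inductive step I apply \autoref{prop:defred} — which asserts $\ker\rline \equiv \ker\rline[\A\ored]$ for \emph{any} system — to the reduced system $(\E\cred[m]\ored[n-1],\A\cred[m]\ored[n-1])$, obtaining $\ker\rline[\A\cred[m]\ored[n-1]] \equiv \ker\rline[\A\cred[m]\ored[n]]$; the induction hypothesis then identifies both with $\ker\rline[\A\cred[m]]$. The second isomorphism is handled in exactly the same way, fixing $n$ and inducting on $m$, but now invoking the dual of \autoref{prop:defred}, namely $\coker\pline \equiv \coker\pline[\A\cred]$, applied to $\A\ored[n]\cred[m-1]$, and using \autoref{prop:commutativity} to rewrite $\A\ored[n]\cred[m]$ as $\A\cred[m]\ored[n]$ so as to match the form of the statement.

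For the nilpotency family I first record that, since $\prline$ regarded as a map from $\coim\prline$ to $\im\prline$ is an isomorphism, one has $\coim\prline \equiv \im\prline$ for every system; in particular the two outer and two inner spaces of the asserted chain coincide once we take the system $\A\ored[n]\cred[m]$, which already supplies the equality $\coim\prline[\A\ored[n]\cred[m]] \equiv \im\prline[\A\ored[n]\cred[m]]$. Combining this with the preceding Proposition, $\coim\prline[\A\cred] \equiv \im\prline[\A\ored]$, collapses all four single-step nilpotency defects into $\coim\prline[\A\ored] \equiv \im\prline[\A\ored] \equiv \coim\prline[\A\cred] \equiv \im\prline[\A\cred]$; that is, one observation reduction and one control reduction yield isomorphic nilpotency defects. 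Applying the single-step identity $\coim\prline[\A\cred] \equiv \coim\prline[\A\ored]$ to the system $\A\ored[n]\cred[m-1]$, and invoking \autoref{prop:commutativity} to move the newly created observation reduction past the remaining control reductions, gives $\coim\prline[\A\ored[n]\cred[m]] \equiv \coim\prline[\A\ored[n+1]\cred[m-1]]$; iterating $m$ times sweeps every control reduction into an observation reduction and produces $\coim\prline[\A\ored[n]\cred[m]] \equiv \coim\prline[\A\ored[n+m]]$. The dual sweep, converting observation reductions into control reductions, yields $\im\prline[\A\ored[n]\cred[m]] \equiv \im\prline[\A\cred[n+m]]$, and the isomorphism $\coim\prline \equiv \im\prline$ then closes the full chain of four.

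The routine parts are the inductions themselves; the only step demanding care is the repeated appeal to \autoref{prop:commutativity}. It is what guarantees that the reduced systems arising at each inductive stage are well defined independently of the order of reduction, and, more essentially, it is the mechanism that converts the a priori two-parameter dependence of the nilpotency defect on $(n,m)$ into a dependence on the single total $n+m$, which is the real content of the third line. Beyond this bookkeeping I anticipate no genuine obstacle, since every single-step ingredient is already established.
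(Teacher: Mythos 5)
Your proposal is correct and matches the paper's intent: the paper states this proposition as a mere gathering of the preceding results (\autoref{prop:defred}, its dual, the proposition identifying $\coim\prline[\A\cred]$ with $\im\prline[\A\ored]$, and \autoref{prop:commutativity}) and omits the inductive bookkeeping entirely, which is exactly what you supply. The induction on the number of reductions, with commutativity used to reorder and to trade control for observation reductions so that the nilpotency defect depends only on $n+m$, is the argument the paper leaves implicit.
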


\section{Observation and Control Indices}

The reduction procedures produce sequences of systems.
% These sequences must stall.
As we showed in \autoref{sec:commutativity}, the order in which reductions of either type are performed is irrelevant.
We therefore define the \demph{observation index} and \demph{control index} to be
\begin{align}
	\ind\ored[*] \coloneqq \min \setc{n\in\NN}{\sys\ored[n] = \sys\ored[n+1]}
\end{align}
and
\begin{align}
	\ind\cred[*] \coloneqq \min \setc{n\in\NN}{\sys\cred[n] = \sys\cred[n+1]}
\end{align}
respectively.

\subsection{Indices and Reduction}

% When both sequences stall, the system is totally reduced.
% The number of reduction steps needed to transform a system into a totally reduced one is called the \emph{index} of the system $(\E,\A)$:
% 
% \begin{definition}
% \label{def_index}
% The smallest integer $n\in\NN$  for which the system $(\E\red{n},\A\red{n})$ is totally reduced
% is called the \alert{index} of the system $(\E,\A)$. 
% 
% We will use the following notation for the index of the system $(\E,\A)$:
% \[\ind\EA := \min \setc{ n\in\NN}{(\E,\A)\red{n+1} = (\E,\A)\red{n} }
% .
% \]
% \end{definition}

% \begin{remark}
The observation-reduced system $(\E\ored,\A\ored)$ has an index dropped by one, i.e,
\begin{align}
	\ind\ored[*]\paren{\E\ored,\A\ored} = \ind\ored[*](\E,\A) - 1
.
\end{align}
There is a dual statement for the control index and control-reduction:
\begin{align}
	\ind\cred[*]\paren{\E\cred,\A\cred} = \ind\cred[*](\E,\A) - 1
.
\end{align}

In the vector space category, both indices are \emph{always} finite integers\footnote{as opposed to the differentiation index, which is infinite in the non-regular pencil case; see, e.g.,~\cite[\S VII.1]{HaWa96}.}.

% Those observations will be used repeatedly to prove statements by induction on the index (e.g., in \autoref{prop_reg_pencil}, \autoref{thm_linear_decomposition} and \autoref{thm_NL_basis}).
% \end{remark}

% \begin{remark}
% \label{rem_def_index}
% Using \autoref{prop_tot_red_V} we observe that
% \[\ind\EA = \min \setc{ n\in\NN}{\Vv\red{n+1} = \Vv\red{n} }
% .
% \]
% \end{remark}

% \begin{remark}
% The index defined in \autoref{def_index} is closely related to the geometric index defined in \cite{Reich}, \cite{Rabier-Rheinboldt} or \cite[\S5.1]{thesis}.
% In fact, the geometric index would be the first integer $n$ such that the system $(\E,\A)\red{n}$ is \emph{almost} reduced (\autoref{def_almost_red}).
% As we shall see in \autoref{prop_ind_no_beta} and \autoref{prop_reg_pencil}, this minor difference is only relevant for singular pencils.
% \end{remark}

In order to understand the observation index, one has to answer the following question: when is a system observation-irreducible?
The following result gives a number of equivalent conditions:
\begin{proposition}
\label{prop:inddefects}
	The following are equivalent
	\begin{thmenumerate}
	\item
\label{it:indzero}
		$\ind\ored[*](\E,\A) = 0$
	\item
\label{it:cokerEzero}
		$\coker\E = 0$
	\item
\label{it:obsdefectzero}
		$\coim\prline = 0$, $\coker\pline = 0$, and $\coker\E\ored = 0$
	\item
\label{it:allobsdefectszero}
		$\ind\ored[*](\E,\A) <\infty$, $\coim\prline[\A\ored[n]] = 0$ and $\coker\pline[\A\ored[n]] = 0$ for all $n\in\NN$
\end{thmenumerate}
\end{proposition}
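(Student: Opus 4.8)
The plan is to prove the chain of equivalences by establishing
\autoref{it:indzero}~$\iff$~\autoref{it:cokerEzero}, then
\autoref{it:cokerEzero}~$\Rightarrow$~\autoref{it:obsdefectzero}~$\Rightarrow$~\autoref{it:allobsdefectszero}~$\Rightarrow$~\autoref{it:indzero},
so that all four conditions become mutually equivalent. The pivotal statement is the equivalence between index zero and the surjectivity of $\E$ (i.e. $\coker\E = 0$), which is already foreshadowed in the introduction; everything else should follow from the exact sequences recorded in \autoref{prop:kronecker} and \autoref{prop:genkronecker}.

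First I would settle \autoref{it:indzero}~$\iff$~\autoref{it:cokerEzero}. By definition $\ind\ored[*](\E,\A) = 0$ means $\sys = \sys\ored$, i.e. the observation reduction does nothing. Looking at~\eqref{eq:diagdefored}, the reduced codomain is $\Vv\ored = \im\E$, and the reduction is trivial precisely when $\Mv\ored = \Mv$ and $\Vv\ored = \Vv$. Since $\Vv\ored = \im\E$, the latter forces $\im\E = \Vv$, i.e. $\coker\E = 0$; conversely, if $\coker\E = 0$ then $\pline$ is the zero map, so $\Mv\ored = \ker\pline = \Mv$ and $\Vv\ored = \im\E = \Vv$, and the reduced system coincides with the original. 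This gives the first equivalence.

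Next, for \autoref{it:cokerEzero}~$\Rightarrow$~\autoref{it:obsdefectzero}, I would feed $\coker\E = 0$ into the decomposition~\eqref{diag:decompcokerE}, which exhibits $\coker\E$ as an extension of $\coker\pline$ by $\im\pline$; when $\coker\E = 0$ both outer terms vanish, so in particular $\coker\pline = 0$. For $\coim\prline$, note that $\prline$ factors through $\coker\E = 0$, hence $\prline = 0$ and $\coim\prline = \ker\E/\ker\prline = 0$ (alternatively, read this off the bold column of~\eqref{diag:alphaored}). Finally $\coker\E\ored = 0$ follows because $\Vv\ored = \im\E = \Vv$ and then the same argument applies one level down, or directly from the last row of~\eqref{diag:alphaored}. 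The implication \autoref{it:obsdefectzero}~$\Rightarrow$~\autoref{it:allobsdefectszero} is where the work concentrates: from the three vanishing conditions I would first deduce, via the decompositions of \autoref{prop:genkronecker}, that the single reduction is an isomorphism of systems, hence $\ind\ored[*](\E,\A) < \infty$ (indeed $=0$); then, having $\sys = \sys\ored$, an induction shows $\sys\ored[n] = \sys$ for all $n$, so that $\coim\prline[\A\ored[n]] = \coim\prline$ and $\coker\pline[\A\ored[n]] = \coker\pline$ vanish for every $n$. The closing implication \autoref{it:allobsdefectszero}~$\Rightarrow$~\autoref{it:indzero} is immediate, since vanishing of $\coim\prline$ and $\coker\pline$ (the $n=0$ case) already forces, through~\eqref{diag:decompcokerE} and the exactness in \autoref{prop:kronecker}, that the reduction is trivial.

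The main obstacle I anticipate is the inductive bookkeeping in \autoref{it:obsdefectzero}~$\Rightarrow$~\autoref{it:allobsdefectszero}: one must argue that the three defect conditions do not merely make a single reduction trivial, but propagate so that \emph{all} iterated defects vanish. The cleanest route is to show that the hypotheses force $\sys\ored \cong \sys$ as systems (using that $\pline[\E]$ is a bijection onto $\coker\E\ored$ and $\pline$ a bijection onto $\im\pline$, per \autoref{prop:genkronecker}), after which the defects of $\sys\ored[n]$ are literally those of $\sys$ and the conclusion is automatic. I would be careful to phrase all kernel/cokernel manipulations as applications of the recorded exact diagrams rather than element-chasing, so that the argument remains valid in an arbitrary abelian category.
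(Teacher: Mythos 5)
Your chain \autoref{it:indzero}~$\iff$~\autoref{it:cokerEzero}~$\Rightarrow$~\autoref{it:obsdefectzero}~$\Rightarrow$~\autoref{it:allobsdefectszero} is sound and essentially matches the paper's route: \autoref{it:indzero}~$\iff$~\autoref{it:cokerEzero} via $\Vv\ored=\im\E$ and $\pline=0$; \autoref{it:cokerEzero}~$\Rightarrow$~\autoref{it:obsdefectzero} by reading the exact columns of \autoref{prop:kronecker}; and \autoref{it:obsdefectzero}~$\Rightarrow$~\autoref{it:allobsdefectszero} by first upgrading \autoref{it:obsdefectzero} to $\coker\E=0$ through the two sequences of \autoref{prop:genkronecker} ($\coim\prline=0$ and $\coker\E\ored=0$ force $\coim\pline=0$, hence $\im\pline=0$, and then $\coker\pline=0$ forces $\coker\E=0$), after which every iterate equals $\sys$ and all its defects vanish.

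The gap is the closing implication \autoref{it:allobsdefectszero}~$\Rightarrow$~\autoref{it:indzero}, which you declare immediate from the $n=0$ defects alone. That claim is false. From $\coker\pline=0$ and $\coim\prline=0$, the sequences \eqref{diag:decompcokerE} and \eqref{diag:decompcoimpline} only yield $\coker\E\cong\im\pline\cong\coim\pline\cong\coim\pline[\E]\cong\coker\E\ored$; they say nothing about these objects being zero. A concrete counterexample is a single block $(\mat{L}_1^{\E},\mat{L}_1^{\A})$ from the introduction, a pencil from a one-dimensional space to a two-dimensional one: there $\ker\E=0$, so $\coim\prline=0$, and $\pline$ is onto $\coker\E$, so $\coker\pline=0$, yet $\coker\E\neq0$ and $\ind\ored[*]=2$. (This system of course violates \autoref{it:allobsdefectszero} at $n=1$ --- which is precisely the point: the defects of the \emph{reduced} systems carry indispensable information.) A correct argument, and the one the paper gives, is a descending induction that genuinely consumes the hypothesis $\ind\ored[*]=N<\infty$ --- a hypothesis your ``immediate'' step never invokes, which should have been a warning sign. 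At level $N$ the system is irreducible, so $\coker\E\ored[N]=0$ by \autoref{it:indzero}~$\iff$~\autoref{it:cokerEzero}; then the equivalence $\coker\E\ored[n]=0\iff\bigl(\coim\prline[\A\ored[n]]=0$ and $\coker\pline[\A\ored[n]]=0$ and $\coker\E\ored[n+1]=0\bigr)$, i.e.\ \autoref{it:cokerEzero}~$\iff$~\autoref{it:obsdefectzero} applied to $(\E\ored[n],\A\ored[n])$, lets you descend from $n=N$ to $n=0$ and conclude $\coker\E=0$.
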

\begin{proof}
	By definition of $\E\ored$ and $\A\ored$, we know that $\ind\ored[*](\E,\A) = 0$ is equivalent to $\Mv\ored = \Mv$ and $\Vv\ored = \Vv$.
	Reading~\eqref{eq:diagdefored}, we see that this is equivalent to $\coker\E = 0$ and $\coim\pline = 0$.
	The last column of the diagram~\eqref{diag:betaored},  gives us
	\begin{align}
		\label{eq:prooflastcol}
	\coker\E = 0 \iff \paren[\Big]{\coim\pline = 0 \text{ and } \coker\pline = 0}
	% \coker\E = 0 \iff \pline = 0
\end{align}
	In particular, we have $\coker\E = 0 \implies \coim\pline = 0$, so we have thus obtained \autoref{it:indzero} $\iff$ \autoref{it:cokerEzero}.
	By the last column of~\eqref{diag:alphaored}, we obtain $\coim\pline = 0 \iff \coim\prline = 0 \text{ and } \coker\E\ored = 0$.
	By using~\eqref{eq:prooflastcol} again, this gives the equivalence with \autoref{it:obsdefectzero}.
	Assuming \autoref{it:indzero}, as $\A\ored[n] = \A$, and by the previous equivalences, we obtain the implication \autoref{it:obsdefectzero} $\implies$ \autoref{it:allobsdefectszero}.
	On the other hand, assume \autoref{it:allobsdefectszero}, so we have $\ind\ored[*](\E,\A) = N$.
	By successively applying \autoref{it:obsdefectzero} to the reduced systems, we obtain $\coker\E = \coker\E\ored[N] = 0$.
\end{proof}

We also have the corresponding dual result
\begin{proposition}
\label{prop:inddefectsdual}
	The following are equivalent
	\begin{thmenumerate}
	\item
		$\ind\cred[*](\E,\A) = 0$
	\item
		$\ker\E = 0$
	\item
		$\im\prline = 0$, $\ker\rline = 0$, and $\ker\E\cred = 0$
	\item
		$\ind\cred[*](\E,\A) <\infty$, $\im\prline[\A\cred[n]] = 0$ and $\ker\rline[\A\cred[n]] = 0$ for all $n\in\NN$
\end{thmenumerate}
\end{proposition}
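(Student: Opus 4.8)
The plan is to deduce this proposition as the formal dual of \autoref{prop:inddefects}. The whole reduction machinery is self-dual: passing to the opposite category $\mathcal{C}^{\mathrm{op}}$ turns a system $(\E,\A)\colon\Mv\to\Vv$ into the same pair of morphisms read as $\Vv\to\Mv$, and under this passage control reduction becomes observation reduction, kernels and cokernels are swapped, images and coimages are swapped, injections and surjections are swapped, and $\rline$ and $\pline$ (together with $\prline$ and its opposite) are interchanged. Granting this, the four clauses here are nothing but the four clauses of \autoref{prop:inddefects} applied to the opposite system, and the equivalence is immediate.

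To make this precise I would first record the translation dictionary: clause~(ii), $\ker\E = 0$, corresponds to $\coker\E = 0$; clause~(iii) matches term by term through $\im\prline = 0 \leftrightarrow \coim\prline = 0$, $\ker\rline = 0 \leftrightarrow \coker\pline = 0$, and $\ker\E\cred = 0 \leftrightarrow \coker\E\ored = 0$; and the control index $\ind\cred[*]$ corresponds to the observation index $\ind\ored[*]$, matching clauses~(i) and~(iv). Once each symbol has been matched, applying \autoref{prop:inddefects} in $\mathcal{C}^{\mathrm{op}}$ closes the argument with no further computation.

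For readers who prefer a self-contained argument, I would instead mirror the proof of \autoref{prop:inddefects} line by line, now reading off the control-reduction diagrams collected in \autoref{sec:appendix}. Here $\ind\cred[*](\E,\A) = 0$ is equivalent to $\Mv\cred = \Mv$ and $\Vv\cred = \Vv$, which by~\eqref{eq:diagdefcred} means $\ker\E = 0$ and $\im\rline = 0$, giving the equivalence of~(i) with~(ii) once the first column of~\eqref{diag:betacred} is used to show $\ker\E = 0 \iff \paren{\im\rline = 0 \text{ and } \ker\rline = 0}$. Splitting $\im\rline = 0$ into $\im\prline = 0$ and $\ker\E\cred = 0$ by the first column of~\eqref{diag:alphacred} then produces the equivalence with~(iii), and the equivalence with~(iv) follows by iterating over the control-reduced systems exactly as in the observation case, using that $\A\cred[n] = \A$ once the control index is zero.

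The hard part is bookkeeping rather than mathematics. In the abstract route one must verify, once and with care, that every ingredient of the definitions---the reduced spaces~\eqref{eq:redspaces}, the morphisms $\pline$, $\rline$, $\prline$, and the index itself---is interchanged in precisely the asserted manner by passage to $\mathcal{C}^{\mathrm{op}}$, so that the clause-by-clause translation is legitimate; in the direct route the only substantive input is that the dual diagrams~\eqref{diag:betacred} and~\eqref{diag:alphacred} have genuinely been established in \autoref{sec:appendix} with the same exactness as~\eqref{diag:betaored} and~\eqref{diag:alphaored}, since these carry the entire weight of the argument.
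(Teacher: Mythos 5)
Your proposal is correct and matches the paper's approach: the paper states this proposition without proof as ``the corresponding dual result'' to \autoref{prop:inddefects}, relying on exactly the duality you spell out (with the dual diagrams~\eqref{diag:betacred} and~\eqref{diag:alphacred} supplied in \autoref{sec:appendix} for the mirror argument). Both your abstract route via $\mathcal{C}^{\mathrm{op}}$ and your explicit clause-by-clause translation are sound and simply make explicit what the paper leaves implicit.
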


% We have the following relation between reductivity and defects.
% \begin{proposition}
% 	\label{prop:indequiv}
% 	The following are equivalent
% 	\begin{thmenumerate}
% 	\item $\coker\E = 0$
% 	\item $\nildef[n] = \beta\ored[n] = 0\qquad n\in\NN$
% 	\item $(\E\ored,\A\ored) = (\E,\A)$
% 	\item $\ind\ored[*](\E,\A) = 0$
% 	\end{thmenumerate}
% 	And the dual version is
% 	\begin{thmenumerate}
% 	\item $\ker\E = 0$
% 	\item $\nildef[n] = \beta\cred[n] = 0\qquad n\in\NN$
% 	\item $(\E\cred,\A\cred) = (\E,\A)$
% 	\item $\ind\cred[*](\E,\A) = 0$
% 	\end{thmenumerate}
% \end{proposition}
% 
% The index is, as expected, a non-dynamical invariant.
% More precisely, it is a function of the defects, as the following proposition shows:
% \begin{proposition}
% The {index} $\ind\EA$ (see \autoref{def_index}) of a linear system $(\E,\A)$ is given by
% \[\ind\EA = \min \setc{n\in\NN}{\forall k > n\quad \nildef[k](\E,\A) =0\quad\text{and}\quad \beta^+_{k}(\E,\A) =0}
% .
% \]
% \end{proposition}
% \begin{proof}
% Following \autoref{rem_def_index}, the index fulfills
% \[\ind\EA = \min_k \dim\DV\red{k+1}=0
% .
% \]
% Using \autoref{lma_rel_defects_dim_DVDM} we thus obtain
% \[\dim \DV\red{k} = 0 \iff \nildef[j]+\beta^+_{j} =0\quad\forall j \geq  k+1
% ,
% \]
% which proves the claim.
% \end{proof}

In view of \autoref{prop:inddefects} and \autoref{prop:inddefectsdual}, we have
\begin{corollary}
\label{prop:indmax}
	The observation and control indices are related to the defective spaces by
	\begin{align}
		\ind\ored[*] &= \max\setc{n\in\NN}{\coim\prline[\A\ored[n]] \neq 0 \quad\text{or}\quad \coker\pline[\A\ored[n]] \neq 0}\\
		\ind\cred[*] &= \max\setc{n\in\NN}{\im\prline[\A\cred[n]] \neq 0 \quad\text{or}\quad \ker\rline[\A\cred[n]] \neq 0}
	\end{align}
\end{corollary}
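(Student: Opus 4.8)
The plan is to derive both displayed identities from the irreducibility criteria of \autoref{prop:inddefects} and \autoref{prop:inddefectsdual}, applied not to $\sys$ itself but to each of its iterated observation reductions $\sys\ored[n]$. The two lines are exchanged by the observation--control duality, so I would prove the first and deduce the second by dualising, with \autoref{prop:inddefectsdual} in place of \autoref{prop:inddefects}. Two preliminary facts set up the bookkeeping: iterated reduction composes, $\paren{\A\ored[n]}\ored[k] = \A\ored[n+k]$, so that the observation defects of the reduced system $\sys\ored[n]$ are exactly the tail $\coim\prline[\A\ored[n+k]]$, $\coker\pline[\A\ored[n+k]]$ ($k\ge 0$) of the defect sequence of $\sys$; and each observation reduction lowers the observation index by one while it is positive, so that, combining with \autoref{it:indzero} $\iff$ \autoref{it:cokerEzero} at level $n$, the equivalence $\coker\E\ored[n] = 0 \iff n \ge \ind\ored[*]$ holds.

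The core step is to apply \autoref{it:cokerEzero} $\iff$ \autoref{it:obsdefectzero} of \autoref{prop:inddefects} to the system $\sys\ored[n]$, which yields the recursion that $\coker\E\ored[n]$ vanishes if and only if $\coim\prline[\A\ored[n]]$, $\coker\pline[\A\ored[n]]$ and $\coker\E\ored[n+1]$ all vanish. Feeding in the previous equivalence $\coker\E\ored[n] = 0 \iff n \ge \ind\ored[*]$, one telescopes downward from the first irreducible level: the defects at every level $\ge \ind\ored[*]$ vanish, while the failure of $\sys\ored[n]$ to be irreducible for $n$ just below the index forces some observation defect to survive there. This pins the largest level at which an observation defect survives, from which $\ind\ored[*]$ is read off as in the stated formula; the packaged all-levels form \autoref{it:indzero} $\iff$ \autoref{it:allobsdefectszero} records the same conclusion in one stroke.

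I expect the only real difficulty to be the infinite-index case, which genuinely occurs in a general abelian category. The recursion relating ``$\coker\E\ored[n]\neq 0$'' to ``some observation defect at level $n$ is nonzero, or $\coker\E\ored[n+1]\neq 0$'' gives no information once the reduction never terminates, and correspondingly condition \autoref{it:allobsdefectszero} carries the hypothesis $\ind\ored[*]<\infty$. One must therefore argue separately that $\ind\ored[*] = \infty$ forces observation defects surviving at arbitrarily high levels — that is, rule out a defect-free infinite descending chain of reductions — or else read the maximum as $\infty$ in that case; the empty-index-set convention for an already irreducible system must likewise be fixed. With the observation identity in hand, the control identity follows by repeating the argument verbatim with \autoref{prop:inddefectsdual}, control reduction replacing observation reduction and the pair $(\im\prline,\ker\rline)$ replacing $(\coim\prline,\coker\pline)$.
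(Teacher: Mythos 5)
Your overall route is the paper's own: the corollary is stated there with no separate proof, precisely as a consequence of \autoref{prop:inddefects} and \autoref{prop:inddefectsdual} applied to each reduced system $\sys\ored[n]$, and your bookkeeping — the composition $\paren{\A\ored[n]}\ored[k]=\A\ored[n+k]$, the equivalence $\coker\E\ored[n]=0 \iff n\geq \ind\ored[*]$, and the level-$n$ recursion obtained from \autoref{prop:inddefects} — is the right way to make that precise. The gap is in your last step, ``from which $\ind\ored[*]$ is read off as in the stated formula''. Carried out honestly, your telescoping shows that every defect at level $n\geq\ind\ored[*]$ vanishes and that some defect at level $\ind\ored[*]-1$ survives; hence the maximum you have computed equals $\ind\ored[*]-1$, not $\ind\ored[*]$. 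Concretely, take $\E=0$ and $\A$ the identity on $\Mv=\Vv=\RR$: then $\Mv\ored=\ker\pline=0$ and $\Vv\ored=\im\E=0$, so $\ind\ored[*]=1$, while the only nonzero defect is $\coim\prline\neq 0$, occurring at level $n=0$. So the identity as printed is off by one: the correct conclusion of your argument is $\ind\ored[*]=1+\max\{n\in\NN : \coim\prline[\A\ored[n]]\neq 0 \text{ or } \coker\pline[\A\ored[n]]\neq 0\}$ (with $\max\emptyset=-1$), or equivalently the printed formula under the shifted indexing the paper itself uses elsewhere (it sets $\nildef[1]\coloneqq\dim\coim\prline$, attaching index $1$ to the level-$0$ defect). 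A proof that claims to land exactly on the displayed formula fails at that point.

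Your second worry, the infinite-index case, is genuine, but neither of the fixes you propose can work. Defect-free infinite descending chains do exist: in the category of abelian groups, let $\Mv=\Vv=\mathbf{Z}$, let $\E$ be multiplication by $2$ and $\A$ the identity. Then $\ker\E=0$, so the nilpotency and control defects vanish, and $\pline$ is surjective, so $\coker\pline=0$; moreover the reduced system, living on $\Mv\ored=\Vv\ored=2\mathbf{Z}$, is isomorphic to the original one, so by induction every defect vanishes at every level, while $\coker\E\ored[n]\cong \mathbf{Z}/2\mathbf{Z}\neq 0$ for all $n$, i.e.\ $\ind\ored[*]=\infty$. The defect set is thus empty both here and for a genuinely irreducible system (where $\ind\ored[*]=0$), so no convention for $\max\emptyset$ can distinguish the two cases, and one cannot ``rule out defect-free infinite descending chains'' either. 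The corollary therefore needs $\ind\ored[*]<\infty$ as a standing hypothesis — which is exactly why condition \autoref{it:allobsdefectszero} of \autoref{prop:inddefects} carries it — and it holds unconditionally only where finiteness is automatic, such as finite-dimensional vector spaces. With these two repairs (the index shift and the finiteness hypothesis), your argument, and its dualization to the control identity, goes through.
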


In the case of a regular pencil, we obtain
\begin{proposition}
If $(\E,\A)$ is a regular pencil, i.e., $\ker\rline[\A\cred[n]] = 0$ and $\coker\pline[\A\ored[n]] = 0$ for any integer $n$,  then the observation and control indices coincide:
\begin{align}
	\ind\cred[*](\E,\A) = \ind\ored[*](\E,\A)
\end{align}
\end{proposition}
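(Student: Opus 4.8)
The plan is to route the entire argument through \autoref{prop:indmax}, which already expresses each index as the largest reduction level carrying a nonzero defect, and then to match the two resulting index sets using the nilpotency-defect identity recorded in \autoref{prop:defcomm}.

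First I would write out both indices using \autoref{prop:indmax}:
\[
	\ind\ored[*] = \max\setc{n\in\NN}{\coim\prline[\A\ored[n]] \neq 0 \ \text{or}\ \coker\pline[\A\ored[n]] \neq 0},
\]
\[
	\ind\cred[*] = \max\setc{n\in\NN}{\im\prline[\A\cred[n]] \neq 0 \ \text{or}\ \ker\rline[\A\cred[n]] \neq 0}.
\]
The regularity hypothesis is exactly the vanishing $\coker\pline[\A\ored[n]] = 0$ and $\ker\rline[\A\cred[n]] = 0$ for all $n\in\NN$, so the observation and control disjuncts disappear and the two index sets collapse to $\setc{n\in\NN}{\coim\prline[\A\ored[n]] \neq 0}$ and $\setc{n\in\NN}{\im\prline[\A\cred[n]] \neq 0}$ respectively.

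The decisive step is then to identify these two sets term by term. Specializing the chain of identities in \autoref{prop:defcomm} to $m = 0$ yields, for every $n\in\NN$, the isomorphism $\coim\prline[\A\ored[n]] \equiv \im\prline[\A\cred[n]]$; since isomorphic objects are simultaneously zero, one of these objects vanishes precisely when the other does. Hence the two index sets coincide as subsets of $\NN$, and taking maxima gives $\ind\ored[*] = \ind\cred[*]$.

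I do not anticipate a genuine obstacle, since the real content is already carried by \autoref{prop:indmax} and \autoref{prop:defcomm}; what remains is bookkeeping. The one point that deserves care is that the equality of the two sets must be established \emph{termwise} rather than by merely comparing two numbers that happen to agree: this is what secures the conclusion in the degenerate cases as well, namely when the common set is empty (both indices are then $0$) and when it is unbounded (both indices are then infinite). Both behaviours are guaranteed because the isomorphism supplied by \autoref{prop:defcomm} holds separately at each level $n$.
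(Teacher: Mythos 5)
Your proposal is correct and follows essentially the same route as the paper: the paper's own proof likewise invokes \autoref{prop:defcomm} to identify $\coim\prline[\A\ored[n]]$ with $\im\prline[\A\cred[n]]$ and then concludes via \autoref{prop:indmax}. Your version merely spells out the termwise comparison of the two index sets more explicitly, which is a harmless (and slightly more careful) elaboration of the same argument.
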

\begin{proof}
	From \autoref{prop:defcomm} we obtain that $\coim\prline[\A\ored[n]] = \coim\prline[\A\cred[n]] = \im\prline[\A\cred[n]]$.
	We conclude using \autoref{prop:indmax}.
\end{proof}

Note that there are systems for which the control and observation indices coincide, but which are not regular pencils.

\subsection{Iterated Reductions}

% The reduction procedure may thus be used once to obtain a totally reduced system, and may then be applied again to the dual of that totally reduced system.

% Starting with a system $(\E,\A)$, we may completely reduce it to obtain the system $(\E\red{\infty},\A\red{\infty})$.
% The operator $\E\reds$ is injective.
% The adjoint system $(\E\reds,\A\reds)$ may be in turn completely reduced to obtain the system $(\E\redss,\A\redss)$.
% Using \autoref{prop_Eredinf_surj} and \autoref{prop_Ered_inj}, we obtain the following result.

% Repeated application of both reduction procedures produce a system where $\ker\E = \coker\E = 0$. 

In general, there is no guarantee than either of the reduction procedures will stall.
We record some evidence that these reduction simplify the system at hand.

\begin{proposition}
\label{prop_Edyn_inv}
If both reduction stall after a finite number of steps, the operator $\E\rred[\infty][\infty]$ is invertible.
% , in the sense that
% \begin{align}
% 	\ker\E\redsss = 0 \qquad \coker\E\redsss = 0
% \end{align}
\end{proposition}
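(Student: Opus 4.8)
The plan is to show that the doubly reduced system $(\E\rred[\infty][\infty],\A\rred[\infty][\infty])$ is at the same time observation-irreducible and control-irreducible, and then to read off from the criteria already established that $\E\rred[\infty][\infty]$ is simultaneously an epimorphism and a monomorphism. The conceptual content is then exactly that, in an abelian category, a morphism with vanishing kernel and cokernel is an isomorphism; the actual work lies in justifying that the system reduced maximally in \emph{both} directions is still irreducible in each direction.

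First I would make the symbol $\E\rred[\infty][\infty]$ precise under the hypothesis. Setting $m \coloneqq \ind\ored[*](\E,\A)$ and $n \coloneqq \ind\cred[*](\E,\A)$, both finite by assumption, I would define $(\E\rred[\infty][\infty],\A\rred[\infty][\infty]) \coloneqq (\E\rred[m][n],\A\rred[m][n])$; by \autoref{prop:commutativity} the order of the reductions is immaterial, so this is unambiguous. I then claim this system is observation-irreducible. Applying the index-drop identity $\ind\ored[*](\E\ored,\A\ored) = \ind\ored[*](\E,\A)-1$ a total of $m$ times gives $\ind\ored[*](\E\ored[m],\A\ored[m]) = 0$, and the point is that the $n$ further control reductions cannot make the observation index positive again. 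This monotonicity follows from \autoref{prop:indmax} together with \autoref{prop:defcomm}: a control reduction leaves the observation sequence $\coker\pline[\A\ored[k]]$ unchanged and reindexes the nilpotency sequence $\coim\prline[\A\ored[k]]$ so that its nonvanishing range can only shrink, whence the observation index is non-increasing under control reduction. Hence $\ind\ored[*](\E\rred[\infty][\infty],\A\rred[\infty][\infty]) = 0$, and by the dual argument the system is also control-irreducible.

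To finish I would invoke the irreducibility criteria. By \autoref{prop:inddefects}, observation-irreducibility is equivalent to $\coker\E\rred[\infty][\infty] = 0$, so $\E\rred[\infty][\infty]$ is an epimorphism; by the dual \autoref{prop:inddefectsdual}, control-irreducibility is equivalent to $\ker\E\rred[\infty][\infty] = 0$, so it is a monomorphism. Since a morphism that is both monic and epic in an abelian category is an isomorphism, $\E\rred[\infty][\infty]$ is invertible. I expect the only genuine obstacle to be the bookkeeping just described, namely confirming that interleaving the two families of reductions never increases either index, because the final step is immediate once both irreducibilities are in hand; this is precisely where commutativity and the invariance of the defect sequences in \autoref{prop:defcomm} carry the argument.
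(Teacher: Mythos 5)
Your proof is correct. There is, however, nothing in the paper to compare it against: \autoref{prop_Edyn_inv} is stated there without any proof at all, so your argument supplies exactly what the author omitted, and it does so using the paper's own machinery in the intended way. Concretely: commutativity (\autoref{prop:commutativity}) makes $\E\rred[\infty][\infty]$ well defined as $\E\rred[m][n]$ with $m=\ind\ored[*](\E,\A)$ and $n=\ind\cred[*](\E,\A)$; the invariance statements of \autoref{prop:defcomm} together with \autoref{prop:indmax} show that a control reduction leaves the observation defect sequence untouched and merely shifts the nilpotency sequence, so the observation index cannot increase under control reduction (and dually), whence the doubly reduced system is irreducible in both senses; and \autoref{prop:inddefects} and \autoref{prop:inddefectsdual} then give $\coker\E\rred[\infty][\infty]=0$ and $\ker\E\rred[\infty][\infty]=0$, so $\E\rred[\infty][\infty]$ is both monic and epic, hence invertible, since abelian categories are balanced. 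One small merit of your bookkeeping worth noting: by bounding the observation index of the doubly reduced system directly (it is forced to be $0$ by monotonicity) rather than only showing that its defect sequences vanish, you sidestep the finiteness hypothesis built into item (iv) of \autoref{prop:inddefects}, which would otherwise be a genuine loose end.
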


% \begin{definition}
% \label{def_dyndim}
% The \alert{dynamical dimension} $\delta$ of the system $(\E,\A)$ is defined by the integer
% \[\delta := \dim\Mv\redsss = \dim\Vv\redsss
% .
% \]
% \end{definition}

\begin{remark}
In the category of finite dimensional vector spaces, since the operator $\E\rred[\infty][\infty]$ is invertible, its domain and co-domain have the same dimension. 
This dimension is the dimension of the intrinsic dynamics of the system.

For a differential equation defined by the system $(\E,\A)$, the system 
\[(\E\rred[\infty][\infty],\A\rred[\infty][\infty])\]
 corresponds to the underlying differential equation.
In particular, the  dimension
\begin{align}
	\delta \coloneqq \dim\Mv\rred[\infty][\infty] = \dim\Vv\rred[\infty][\infty]
\end{align}
 determines the degrees of freedom for the choice of the initial condition.
\end{remark}

Introduce for simplicity the notation
\begin{align}
	\DM\ored[k] \coloneqq \Mv\ored[k-1]/\Mv\ored[k] 
	\qquad\text{and}\qquad
	\DV\ored[k] \coloneqq \Vv\ored[k-1]/\Vv\ored[k]
	.
\end{align}
Reading the diagrams of \autoref{prop:kronecker}, we see that $\pline$ is an injection from $\DM\ored[k]$ to $\DV\ored[k]$, and that $\pline[\E]$ is a surjection from $\DM\ored[k]$ to $\DV\ored[k+1]$, for any integer $k\geq 1$.
This implies the following \emph{injections}, which conveys the idea that the effect of each reduction becomes smaller and smaller.
\begin{align}
\begin{tikzpicture}[ineq, ampersand replacement=\&]
\matrix(m) [smallcommdiag,]
{
	\cdots \& \DM\ored[k+1] \&  \dim \DV\ored[k+1] \&   \DM\ored[k] \&   \DV\ored[k]\& \cdots \\
};
\path[right hook->]
(m-1-1) edge (m-1-2)
(m-1-2) edge (m-1-3)
(m-1-3) edge  (m-1-4)
(m-1-4) edge (m-1-5)
(m-1-5) edge (m-1-6)
;
\end{tikzpicture}
\end{align}

This effect is of course only made precise in the category of finite-dimensional vector spaces, where this implies the inequalities
% \begin{align}
% 	\cdots \leq \dim\coim\pline\ored[k+1] \leq \dim \coker\E\ored[k+1] \leq \dim \coim\pline\ored[k] \leq \dim\coker\E\ored[k]\leq \cdots
% \end{align}
% which is equivalent to
\begin{align}
	\cdots \leq \dim\DM\ored[k+1] \leq  \dim \DV\ored[k+1] \leq  \dim \DM\ored[k] \leq  \dim \DV\ored[k]\leq \cdots
,
\end{align}

This is the same sequence of inequalities as in~\cite[\S\,5.2]{Wilkinson}.

\section{Defect spaces and resolvent set}
\label{sec_regular_pencil}

In this section, we assume that the category at hand is a category of $R$-modules, for a given ring $R$.
Note that any abelian category can be embedded in such a category by the Mitchell--Freyd theorem~\cite[\S\,7]{Fr64}.

\begin{definition}
\label{def:resolventset}
We define the \demph{resolvent set} of the system $(\E,\A)$ as
\begin{align}
	\resolvent(\E,\A) = \setc{\lambda \in R}{\lambda \E + \A \text{ is invertible}}
  .
\end{align}
\end{definition}

\begin{lemma}
\label{lma_regular_pencil}
% The system $(\E,\A)$ is a regular pencil if and only if both the following properties hold:
For any $\lambda \in R$, if two of the following properties hold, then so does the third.
\begin{thmenumerate}
	\item  $\lambda \E + \A$ is invertible
	\item  ${\lambda \E\ored + \A\ored}$ is invertible
	\item $\coker\pline = 0$
\end{thmenumerate}
Note that the last property is independent of $\lambda$.
% so we obtain in particular that if $\coker\pline = 0$, then $\lambda\E + \A$ is never invertible.
\end{lemma}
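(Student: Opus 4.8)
The plan is to apply the operator $\lambda\E + \A$ to the reduction diagram~\eqref{eq:diagdefored} and then invoke the snake lemma. The three columns of that diagram carry the pairs $(\E\ored,\A\ored)$, $(\E,\A)$ and $(0,\pline)$. Forming the combination $\lambda(\cdot)+(\cdot)$ of each pair, and using $R$-linearity together with the fact that both squares commute separately for $\E$ and for $\A$, produces a morphism of short exact sequences whose three vertical maps are $\lambda\E\ored+\A\ored$, $\lambda\E+\A$, and the induced map $\pline\colon\coim\pline\to\coker\E$ (here $\E$ contributes nothing to the right column, since it projects to $0$ on $\coker\E$).

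I would next record two elementary facts about the right-hand vertical map. By the factorisation of any morphism through its coimage, $\pline\colon\coim\pline\to\coker\E$ is a monomorphism, so its kernel vanishes; and its cokernel is exactly $\coker\pline$, as displayed in~\eqref{diag:decompcokerE} and in~\autoref{prop:genkronecker}. In particular this map is invertible precisely when $\coker\pline=0$, which is condition~(iii).

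The heart of the argument is the snake lemma applied to this morphism of short exact sequences. Because the right-hand kernel vanishes, the connecting sequence splits into
\[
0 \to \ker(\lambda\E\ored+\A\ored) \to \ker(\lambda\E+\A) \to 0
\]
and
\[
0 \to \coker(\lambda\E\ored+\A\ored) \to \coker(\lambda\E+\A) \to \coker\pline \to 0 .
\]
The first sequence shows that the reduced and unreduced operators have isomorphic kernels, so their injectivity is equivalent regardless of~(iii); the second is the exact sequence linking the three cokernels.

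Finally I would read off the two-out-of-three statement, using that in a category of $R$-modules a morphism is invertible iff both its kernel and cokernel vanish. If~(i) and~(ii) hold, the two left-hand cokernels in the second sequence vanish, forcing $\coker\pline=0$, i.e.~(iii). If~(i) and~(iii) hold, then $\ker(\lambda\E+\A)=0$ yields $\ker(\lambda\E\ored+\A\ored)=0$ by the first sequence, while $\coker(\lambda\E+\A)=0$ and $\coker\pline=0$ force $\coker(\lambda\E\ored+\A\ored)=0$ by exactness, giving~(ii); the case where~(ii) and~(iii) give~(i) is symmetric. The only real subtlety, and the reason the statement lives in the $R$-module setting, is the first step: one must check that $\lambda\E+\A$ genuinely induces $\lambda\E\ored+\A\ored$ on the reduced space, which is where $R$-linearity of the reduction is used; after that, the snake lemma does all the work.
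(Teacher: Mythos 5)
Your proposal is correct and follows essentially the same route as the paper: the paper builds the identical morphism of short exact sequences (its diagram~\eqref{diag:sop}, with vertical maps $\Sop[\lambda]\ored$, $\Sop[\lambda]$, and $\pline$ on $\coim\pline$), notes that $\pline$ restricted to $\coim\pline$ is invertible iff $\coker\pline=0$, and concludes by the short five lemma / diagram chasing. Your use of the snake lemma is just an explicit packaging of that same diagram chase, with the kernel isomorphism and the cokernel exact sequence making the two-out-of-three conclusion transparent.
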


\begin{proof}
We define the operator $\Sop[\lambda]$  by
\[\Sop[\lambda] := \lambda\E + \A
.
\]
% $\Sop[\lambda]$ can be decomposed into $\Sop[\lambda]'$ and $[\Sop[\lambda]]$ according to the following commuting diagram:
Consider the following diagram:
\begin{align}
	\label{diag:sop}
\begin{tikzpicture}[ineq]
\matrix(m) [commdiag]
{
	\& \& \& 0 \&\\
	0 \& \Mv\ored \& \Mv \& \coim \pline \& 0 \\
0 \& \Vv\ored \& \Vv \& \coker\E \& 0 \\
\& \& \& \diagdim{\coker\pline}[\beta\ored] \& \\
\& \& \& 0 \&\\
};
\path[->]
(m-2-1) edge (m-2-2)
(m-2-2) edge (m-2-3)
(m-2-3) edge (m-2-4)
(m-2-4) edge (m-2-5)
(m-3-1) edge (m-3-2)
(m-3-2) edge (m-3-3)
(m-3-3) edge (m-3-4)
(m-3-4) edge (m-3-5)
(m-2-2) edge node[auto] {$\Sop[\lambda]\ored$} (m-3-2)
(m-2-3) edge node[auto] {$\Sop[\lambda]$} (m-3-3)
(m-2-4) edge node[auto] {$\pline$} (m-3-4);
\path[->]
(m-1-4) edge (m-2-4)
(m-2-4) edge (m-3-4)
(m-3-4) edge (m-4-4)
(m-4-4) edge (m-5-4)
;
\end{tikzpicture}
\end{align}
The diagram commutes because $\E$ projected to $\coker\E$ is zero.

From the short five lemma (or by simple diagram chasing), we obtain that out of the three operators $\Sop[\lambda]\ored$, $\Sop[\lambda]$ and $\pline$ (restricted on $\coim\pline$), if two of them are invertible then the third one is. 
% One easy way to prove this fact\footnote{This is a very general result that holds in other contexts as well, since one may also prove it by diagram chasing.} is by choosing bases in $\Mv$ and $\Vv$ which are compatible with the subspaces $\Mv'$ and $\Vv'$. The operator $\Sop[\lambda]$ is then represented by a block triangular matrix where the diagonal blocks are the matrices of $\Sop[\lambda]'$ and $[\Sop[\lambda]]$.
% Now it is easy to check that if two of those three matrices are invertible, the third one is.

% \item
% Notice that for any $\lambda\in\CC$, $[\Sop[\lambda]] = [\A]$ (the operator $[\A]$ is defined in \autoref{prop_quotient_op}), so $[\Sop[\lambda]]$ is invertible if and only if $\beta_1^+ = 0$. 
% As a result, we obtain the property
Finally,  $\pline$ restricted to $\coim\pline$ is invertible if and only if $\coker\pline = 0$.
% , that is, if and only if $\beta\ored = 0$.
% We obtain
% \[\beta\ored =0 \implies \bracket[\big]{  \forall \lambda\in\CC\quad \Sop[\lambda] \text{ invertible} \iff \Sop[\lambda]' \text{ invertible} }
% .
% \]
% \item
% For any $\lambda\in\CC$, the surjectivity of $\Sop[\lambda]$ implies that of $[\Sop[\lambda]]$. Since $[\Sop[\lambda]]$ does not depend on $\lambda$, it means that if $[\Sop[\lambda]]=[\A]$ is not surjective, then $\Sop[\lambda]$ is not surjective for any $\lambda\in\CC$. Now since, by definition, if $\beta^+_1\neq 0$ then $[A]$ is not surjective, we conclude that
% \[\beta\ored \neq  0 \implies \forall \lambda\in\CC\quad \Sop[\lambda] \text{ not invertible}
% .
% \]
% \end{enumerate}
\end{proof}

There is of course a dual statement to \autoref{lma_regular_pencil}.

We obtain the following alternative:
\begin{proposition}
\label{prop:regpencil}
	% If $\coker\pline = 0$, then  for any $\lambda \in R$, $\lambda\E + \A$ is invertible if and only if $\lambda\E\ored + \A\ored$ is, in other words, they have the same spectrum.
	% If $\coker\pline \neq 0$, then for any $\lambda \in R$, $\lambda \E + \A $ is not invertible.
	We have the following mutually exclusive alternative
	\begin{thmenumerate}
		\item if $\coker\pline = 0$ then $\resolvent(\E,\A) = \resolvent(\E\ored, \A\ored)$.
		\item if $\coker\pline \neq 0$ then $\resolvent(\E,\A) = \emptyset$.
	\end{thmenumerate}
\end{proposition}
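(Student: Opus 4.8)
The plan is to deduce \autoref{prop:regpencil} directly from \autoref{lma_regular_pencil} by splitting into the two cases according to whether $\coker\pline$ vanishes, and observing that the two cases are indeed mutually exclusive because $\coker\pline$ either is zero or is not. The key point is that \autoref{lma_regular_pencil} relates the invertibility of $\Sop[\lambda] = \lambda\E + \A$ to that of $\Sop[\lambda]\ored = \lambda\E\ored + \A\ored$ and to the condition $\coker\pline = 0$, and that this holds for \emph{every} $\lambda \in R$ simultaneously.

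First I would treat the case $\coker\pline = 0$. Here property (iii) of \autoref{lma_regular_pencil} holds for all $\lambda$, so the lemma reduces to the implication that (i) and (ii) are equivalent: for each fixed $\lambda$, $\lambda\E + \A$ is invertible if and only if $\lambda\E\ored + \A\ored$ is invertible. By the definition of the resolvent set in \autoref{def:resolventset}, this equivalence, quantified over all $\lambda \in R$, is exactly the statement that $\resolvent(\E,\A) = \resolvent(\E\ored,\A\ored)$, giving the first alternative.

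Next I would treat the case $\coker\pline \neq 0$. Here property (iii) \emph{fails}. Suppose for contradiction that $\resolvent(\E,\A) \neq \emptyset$, i.e., there exists some $\lambda \in R$ with $\lambda\E + \A$ invertible, so property (i) holds. I claim that property (ii) must then also hold: this requires the dual/companion fact that invertibility of $\Sop[\lambda]$ forces invertibility of $\Sop[\lambda]\ored$, which follows by reading the commuting diagram~\eqref{diag:sop} — since $\Sop[\lambda]$ is invertible and the vertical arrows sit in exact rows, the induced map $\Sop[\lambda]\ored$ on the subobjects $\Mv\ored \to \Vv\ored$ is injective, and a count (or diagram chase) shows it is also surjective precisely when there is no obstruction coming from $\coker\pline$. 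With (i) and (ii) in hand, \autoref{lma_regular_pencil} forces (iii), i.e., $\coker\pline = 0$, contradicting the case assumption. Hence $\resolvent(\E,\A) = \emptyset$, which is the second alternative.

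The main obstacle is the direction used in the second case: showing that invertibility of $\Sop[\lambda]$ (property (i)) together with $\coker\pline \neq 0$ cannot coexist. The cleanest route is to apply \autoref{lma_regular_pencil} contrapositively — if (iii) fails, then (i) and (ii) cannot both hold — but one still needs to rule out (i) holding while (ii) fails. This is where the diagram~\eqref{diag:sop} does the real work: when $\coker\pline \neq 0$, the rightmost column shows $\pline$ restricted to $\coim\pline$ is \emph{not} invertible, so by the three-out-of-three principle of \autoref{lma_regular_pencil}, $\Sop[\lambda]$ and $\Sop[\lambda]\ored$ cannot both be invertible; a short chase through the snake lemma confirms that a non-invertible $\pline$ obstructs surjectivity of $\Sop[\lambda]$ itself, forcing $\Sop[\lambda]$ to fail invertibility for every $\lambda$. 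I would keep this step terse, citing \autoref{lma_regular_pencil} and~\eqref{diag:sop} rather than re-deriving the chase.
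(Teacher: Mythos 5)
Your proof is correct and rests on the same two pillars as the paper's: \autoref{lma_regular_pencil} and the commuting diagram~\eqref{diag:sop}; case (i) is handled identically. For case (ii) the paper argues via kernels: it reads off $\ker\Sop[\lambda]\ored \subset \ker\Sop[\lambda]$ from~\eqref{diag:sop}, concludes $\resolvent(\E,\A)\subset\resolvent(\E\ored,\A\ored)$, and then uses the two-imply-the-third structure of the lemma to force $\coker\pline=0$. Your closing observation goes through the cokernel side instead: since the right-hand square of~\eqref{diag:sop} commutes and the horizontal maps $\Mv\to\coim\pline$ and $\Vv\to\coker\E$ are epimorphisms, surjectivity of $\Sop[\lambda]$ forces surjectivity of $\pline$, i.e.\ $\coker\pline=0$; this yields alternative (ii) directly for every $\lambda$ at once, without routing through $\Sop[\lambda]\ored$, and is arguably tighter than the paper's kernel inclusion (which on its own only controls injectivity of $\Sop[\lambda]\ored$). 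One caveat: your intermediate claim that $\Sop[\lambda]\ored$ ``is surjective precisely when there is no obstruction coming from $\coker\pline$'' is muddled --- you are in the case $\coker\pline\neq 0$, so taken literally that sentence would be arguing against (ii) rather than for it; drop that detour and keep only the direct cokernel/snake-lemma argument, which is what actually closes the case.
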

\begin{proof}
\autoref{lma_regular_pencil} gives $\coker\pline = 0 \implies \resolvent(\E,\A) = \resolvent(\E\ored,\A\ored)$ on the one hand, and $\coker\pline = 0 \iff \resolvent(\E\ored, \A\ored) \cap \resolvent(\E,\A) = \emptyset$ on the other hand.
We see from~\eqref{diag:sop} that $\ker\Sop[\lambda]\ored \subset \ker\Sop[\lambda]$, so $\resolvent(\E,\A) \subset \resolvent(\E\ored,\A\ored)$, which proves the result.
\end{proof}

This gives the ``regular pencil'' theorem.

\begin{proposition}
\label{prop:regpencilcor}
	% Suppose that $\ind\ored[*](\E,\A) < \infty$ and $\ind\cred[*](\E,\A)<\infty$.
	Suppose that for some integer $M$ and $N$, $\resolvent(\E\ored[N]\cred[M],\A\ored[N]\cred[M]) \neq \emptyset$.
	Then $\resolvent(\E,\A) \neq \emptyset$ if and only if $\coker\pline[\A\ored[n]] = 0$ and $\ker\rline[\A\cred[m]]$ for all $n \leq N$ and $m\leq M$.
\end{proposition}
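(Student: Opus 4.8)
The plan is to pass from $(\E,\A)$ to the fully reduced system $(\E\ored[N]\cred[M],\A\ored[N]\cred[M])$ one reduction at a time, in a fixed order (first all $N$ observation reductions, then all $M$ control reductions), applying at each step the clean alternative of \autoref{prop:regpencil} and its control-dual. The useful observation is that \autoref{prop:regpencil} packages into a single biconditional: $\resolvent(\E,\A) \neq \emptyset$ holds if and only if both $\coker\pline = 0$ and $\resolvent(\E\ored,\A\ored) \neq \emptyset$ hold. Indeed, alternative (ii) gives the contrapositive implication $\resolvent(\E,\A)\neq\emptyset \implies \coker\pline = 0$, and then alternative (i) upgrades this to the equality of the two resolvents. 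Iterating this biconditional by a one-line induction on $N$ (using that reducing $(\E\ored,\A\ored)$ a further $k$ times yields $(\E\ored[k+1],\A\ored[k+1])$) shows that $\resolvent(\E,\A)\neq\emptyset$ if and only if $\coker\pline[\A\ored[n]] = 0$ at each of the $N$ observation steps and $\resolvent(\E\ored[N],\A\ored[N])\neq\emptyset$.

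Next I would run the analogous argument on the control side, starting from $(\E\ored[N],\A\ored[N])$. Using the control-dual of \autoref{prop:regpencil} (which follows from the dual of \autoref{lma_regular_pencil} in exactly the same way), the same induction on $M$ gives that $\resolvent(\E\ored[N],\A\ored[N])\neq\emptyset$ if and only if $\ker\rline[\A\ored[N]\cred[m]] = 0$ at each of the $M$ control steps and $\resolvent(\E\ored[N]\cred[M],\A\ored[N]\cred[M])\neq\emptyset$, where I invoke \autoref{prop:commutativity} to identify the two iterated reductions. The delicate point is that the defects now appearing, $\ker\rline[\A\ored[N]\cred[m]]$, live on the already observation-reduced system and must be rewritten in terms of the original. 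This is precisely what \autoref{prop:defcomm} supplies: writing $\A\ored[N]\cred[m] \equiv \A\cred[m]\ored[N]$ by \autoref{prop:commutativity} and then discarding the observation reductions, which leave the control defects unchanged, yields $\ker\rline[\A\ored[N]\cred[m]] \equiv \ker\rline[\A\cred[m]]$.

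Finally I would splice the two equivalences together. Since the hypothesis supplies $\resolvent(\E\ored[N]\cred[M],\A\ored[N]\cred[M])\neq\emptyset$, the terminal resolvent condition is automatic, so what survives is exactly that $\resolvent(\E,\A)\neq\emptyset$ if and only if all the observation defects $\coker\pline[\A\ored[n]]$ and all the control defects $\ker\rline[\A\cred[m]]$ consumed along the two chains vanish, which are the defect conditions listed in the statement. The main obstacle here is combinatorial rather than analytic: one must commit to a reduction order and then verify that the defects consumed during the second block of reductions can be transported back to the untouched original system. The whole argument hinges on \autoref{prop:defcomm}, whose content is exactly that observation reductions do not disturb the control defects and conversely, so that the final criterion depends only on the two one-parameter families $\coker\pline[\A\ored[n]]$ and $\ker\rline[\A\cred[m]]$ attached to $(\E,\A)$ itself.
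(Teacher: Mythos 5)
Your proposal is correct and is essentially the paper's own argument: the paper states \autoref{prop:regpencilcor} as an immediate consequence of \autoref{prop:regpencil} (offering no separate proof beyond ``this gives the regular pencil theorem''), and your step-by-step iteration of that alternative along a fixed reduction order, with \autoref{prop:commutativity} and \autoref{prop:defcomm} used to transport the control defects $\ker\rline[\A\ored[N]\cred[m]]$ back to $\ker\rline[\A\cred[m]]$ on the original system, is precisely that intended argument made explicit. The only cosmetic discrepancy is an off-by-one in the indexing: your chains consume the defects for $n<N$ and $m<M$ while the statement asks for $n\leq N$ and $m\leq M$, but the boundary cases follow by applying the same contrapositive once more to the fully reduced system, whose resolvent equals $\resolvent(\E,\A)$.
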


\section{Applications in the category of vector spaces}

% \subsection{Coupling Lemma}
\subsection{Defects}

% Notice further that $\rline$ is surjective, and $\pline$ is injective.
Recall the definitions of $\pline$ and $\rline$ in~\eqref{eq:defpline} and~\eqref{eq:defrline}.

We define the  \demph{observation defect} $\beta\ored$ and \demph{control defect} $\beta\cred$ by
\begin{align}
	\beta\ored &\coloneqq \dim \coker\pline & \beta\cred &\coloneqq  \dim \ker\rline
\end{align}

We define the \demph{nipotency defect} $\nildef[1]$ by
\begin{align}
	\nildef[1] \coloneqq \dim \im\prline = \dim \coim\prline
	.
\end{align}

\begin{proposition}
\label{prop_rel_defect_dims}
This in turn gives the following relations between the defects and the kernels and cokernels of the observation-reduced and control-reduced system
	\begin{align}
		\label{eq:alphadimkE}
		\nildef[1] &= \dim\ker\E - \dim \ker \E\ored \\
		&= \dim\coker\E - \dim \coker\E\cred\\
		\beta\ored &= \dim\coker\A - \dim\coker\A\ored \\
		\beta\cred &= \dim\ker\A - \dim\ker\A\cred 
	\end{align}
	Moreover, we have
	\begin{align}
		\label{eq:kerAequal}
		0 &= \dim \ker \A - \dim \ker\A\ored \\
		&= \dim \coker \A - \dim\coker\A\cred
	\end{align}
Finally, we have the dual relations
\begin{align}
	\dim \coker\E - \dim\coker\E\ored &= \nildef[1] + \beta\ored\\
	\dim \ker\E - \dim\ker\E\cred &= \nildef[1] + \beta\cred
\end{align}
	
\end{proposition}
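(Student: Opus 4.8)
The plan is to use that in the category of finite-dimensional vector spaces every exact sequence has vanishing Euler characteristic, so each asserted identity can be read off directly from an exact sequence already produced earlier in the paper. The only extra ingredients are the bookkeeping facts $\dim\im\prline = \dim\coim\prline = \nildef[1]$, $\dim\coker\pline = \beta\ored$ and $\dim\ker\rline = \beta\cred$, which hold by the very definitions of the three defects.

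First I would settle the two lines of~\eqref{eq:alphadimkE} concerning $\nildef[1]$. \autoref{prop:kercoker} provides the four-term exact sequence $0\to\ker\E\ored\to\ker\E\to\coker\E\to\coker\E\cred\to0$ with middle morphism $\prline$, together with the identifications $\ker\E\ored = \ker\prline$ and $\coker\E\cred = \coker\prline$. Rank--nullity for $\prline\colon\ker\E\to\coker\E$ gives $\nildef[1] = \dim\im\prline = \dim\ker\E - \dim\ker\E\ored$ on the source side, and $\nildef[1] = \dim\coim\prline = \dim\coker\E - \dim\coker\E\cred$ on the target side.

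Next I would read the remaining lines of~\eqref{eq:alphadimkE} and both lines of~\eqref{eq:kerAequal} off the bold rows of \autoref{prop:kronecker}. The bold top row of~\eqref{diag:betaored} is $0\to\ker\A\ored\to\ker\A\to0$, so $\dim\ker\A = \dim\ker\A\ored$, which is the first line of~\eqref{eq:kerAequal}; the bold bottom row $0\to\coker\A\ored\to\coker\A\to\coker\pline\to0$ gives $\beta\ored = \dim\coker\A - \dim\coker\A\ored$. The dual diagram~\eqref{diag:betacred} likewise supplies $0\to\ker\rline\to\ker\A\to\ker\A\cred\to0$ and $0\to\coker\A\cred\to\coker\A\to0$, yielding $\beta\cred = \dim\ker\A - \dim\ker\A\cred$ and $\dim\coker\A = \dim\coker\A\cred$ (the second line of~\eqref{eq:kerAequal}).

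Finally, for the two dual relations I would combine the decomposition sequences of \autoref{prop:genkronecker}. Sequence~\eqref{diag:decompcokerE} gives $\dim\coker\E = \dim\im\pline + \beta\ored$, and since $\pline$ restricts to an isomorphism $\coim\pline\cong\im\pline$ this is $\dim\coim\pline + \beta\ored$. Sequence~\eqref{diag:decompcoimpline} gives $\dim\coim\pline = \nildef[1] + \dim\coim\pline[\E]$, and $\pline[\E]$ restricts to an isomorphism $\coim\pline[\E]\cong\coker\E\ored$, so $\dim\coim\pline = \nildef[1] + \dim\coker\E\ored$. Substituting gives $\dim\coker\E - \dim\coker\E\ored = \nildef[1] + \beta\ored$, and the remaining identity $\dim\ker\E - \dim\ker\E\cred = \nildef[1] + \beta\cred$ follows by the control-reduction dual of \autoref{prop:genkronecker}. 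I expect no genuine obstacle here: the entire argument is dimension-counting, and the only point demanding care is the dualization bookkeeping, namely checking that the control-reduced diagram~\eqref{diag:betacred} and the dual of \autoref{prop:genkronecker} deliver precisely the mirror images of the exact sequences used for the observation reduction.
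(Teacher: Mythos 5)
Your proof is correct and is essentially the paper's own (implicit) argument: the paper states \autoref{prop_rel_defect_dims} without any proof, treating it as an immediate dimension count on the exact sequences already established — \autoref{prop:kercoker}, the bold rows of~\eqref{diag:betaored} and~\eqref{diag:betacred}, and \autoref{prop:genkronecker} with its dual — which is exactly the count you carry out. (Your writing the two-term row of~\eqref{diag:betacred} as $0\to\coker\A\cred\to\coker\A\to 0$ rather than $0\to\coker\A\to\coker\A\cred\to 0$ is immaterial, since either orientation asserts an isomorphism and yields the same dimension identity.)
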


\subsection{Relation with the Kronecker Decomposition}
\label{sec:veckronecker}

Recall that, in finite dimensional vector spaces, short exact sequences split, in other word, every subspace has a complementary subspace.
By applying this to \autoref{prop:genkronecker}, we obtain the following result:
\begin{lemma}
\label{lma_coupling_system}
Suppose that we have a decomposition $\Vv\ored = \Vv\ored[2] \oplus \sect{\coker\E\ored}$.
There are decompositions $\Mv = \Mv\ored \oplus \sect{\coim\pline[\E]} \oplus \sect{\coim\prline}$ and $\Vv = \Vv\ored \oplus \sect{\im\pline} \oplus \sect{\coker\pline}$ such that $\E$ is a bijection from $\sect{\coim\pline[\E]}$ to $\sect{\coker\E\ored}$ and $\A$ is a bijection from $\sect{\coim\pline}$ to $\sect{\im\pline}$.
\end{lemma}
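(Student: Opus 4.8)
The plan is to obtain every required summand from \autoref{prop:genkronecker} by splitting its short exact sequences (permissible since we work in finite-dimensional vector spaces), and then to \emph{upgrade} the arbitrary complements into ones adapted to $\E$ and $\A$. The existence of complements is automatic; the only genuine work is this compatibility, i.e.\ arranging that $\E$ and $\A$ \emph{themselves} (not merely their induced quotient maps $\pline[\E]$ and $\pline$) become bijections on the prescribed pieces.

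First I would build the $\E$-section. Since $\Vv\ored = \im\E$, the given summand $\sect{\coker\E\ored}\subset\Vv\ored$ lies in the image of $\E$, so I set $\sect{\coim\pline[\E]}$ to be any complement of $\ker\E$ inside the preimage $\E^{-1}(\sect{\coker\E\ored})$; then $\E$ restricts to a bijection $\sect{\coim\pline[\E]}\to\sect{\coker\E\ored}$. To see this subspace fits into a decomposition of $\Mv$, I would check $\Mv\ored\cap\sect{\coim\pline[\E]}=0$: if $m$ lies in both, then $\E m\in\im\E\ored = \Vv\ored[2]$ (because $m\in\Mv\ored$) and $\E m\in\sect{\coker\E\ored}$ (by construction), so $\E m\in\Vv\ored[2]\cap\sect{\coker\E\ored}=0$, whence $m\in\ker\E\cap\sect{\coim\pline[\E]}=0$. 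This is the step where the hypothesis that $\Vv\ored=\Vv\ored[2]\oplus\sect{\coker\E\ored}$ is a \emph{direct} sum is essential. A dimension count, using $\dim\sect{\coim\pline[\E]}=\dim\coker\E\ored=\dim\coim\pline[\E]$ together with the splitting $\coim\pline = \sect{\coim\prline}\oplus\sect{\coim\pline[\E]}$ coming from \eqref{diag:decompcoimpline}, then lets me take $\sect{\coim\prline}$ to be any complement of $\Mv\ored\oplus\sect{\coim\pline[\E]}$ in $\Mv$, yielding $\Mv=\Mv\ored\oplus\sect{\coim\pline[\E]}\oplus\sect{\coim\prline}$.

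Next I would treat $\A$. Writing $\sect{\coim\pline}:=\sect{\coim\pline[\E]}\oplus\sect{\coim\prline}$, this is a complement of $\Mv\ored$, hence carried isomorphically onto $\coim\pline$ by the quotient map. Since \autoref{prop:genkronecker} gives that $\pline$ induces an isomorphism $\coim\pline\to\im\pline$, the composite of $\A|_{\sect{\coim\pline}}$ with the projection $\Vv\to\coker\E$ is injective with image $\im\pline$; in particular $\A$ is injective on $\sect{\coim\pline}$ and $\A(\sect{\coim\pline})$ meets $\Vv\ored=\ker(\Vv\to\coker\E)$ trivially. I therefore set $\sect{\im\pline}:=\A(\sect{\coim\pline})$, a lift of $\im\pline$ into $\Vv$ on which $\A$ is, by construction, a bijection from $\sect{\coim\pline}$.

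Finally, to assemble $\Vv$, I would split $\Vv=\Vv\ored\oplus\sect{\coker\E}$ and refine $\sect{\coker\E}$ via the exact sequence \eqref{diag:decompcokerE}. Having already fixed $\sect{\im\pline}$ with $\sect{\im\pline}\cap\Vv\ored=0$, I choose $\sect{\coker\pline}$ to be any complement of $\Vv\ored\oplus\sect{\im\pline}$ in $\Vv$; comparing dimensions against \eqref{diag:decompcokerE} gives $\dim\sect{\coker\pline}=\dim\coker\pline$, so $\Vv=\Vv\ored\oplus\sect{\im\pline}\oplus\sect{\coker\pline}$. The main obstacle is precisely the coupling step: the sections $\sect{\coim\pline[\E]}$ and $\sect{\im\pline}$ must be realized as genuine preimages and images under $\E$ and $\A$, not as arbitrary complements, so that the data of the two reductions line up simultaneously; once these are fixed, everything else reduces to splitting exact sequences and matching dimensions.
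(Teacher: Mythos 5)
Your proposal is correct and takes essentially the same route as the paper: the paper's entire justification is the remark that short exact sequences of finite-dimensional vector spaces split, applied to \autoref{prop:genkronecker}, and your argument is precisely that splitting with the genuinely necessary coupling steps spelled out (taking $\sect{\coim\pline[\E]}$ to be a complement of $\ker\E$ inside $\E\inv(\sect{\coker\E\ored})$, and setting $\sect{\im\pline} := \A(\sect{\coim\pline})$). One optional refinement: choosing $\sect{\coim\prline}$ as a complement of $\ker\E\ored$ inside $\ker\E$ (it is transverse to $\Mv\ored \oplus \sect{\coim\pline[\E]}$ by the same intersection argument you already gave) makes it a genuine section of $\coim\prline = \ker\E/\ker\E\ored$ on which $\E$ vanishes, which is what the canonical form in \autoref{thm_strangeness} implicitly uses, whereas your arbitrary complement only satisfies the lemma's literal requirements.
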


The {Kronecker canonical form} makes use of special blocks, each of which having a variant for the matrices $\E$ and $\A$.

\begin{theorem}
\label{thm_Kronecker}
A decomposition with defects $\nildef[k]$, $\beta\ored[k]$, $\beta\cred[k]$, produces a Kronecker decomposition which for all integer $k\geq 1$ contains
\begin{itemize}
	\item $\nildef[k]$ block of type $\mat{N}_{k}$,
	\item $\beta\ored[k]$ blocks of type $\mat{L}_k$,
	\item $\beta\cred[k]$ blocks of type $\mat{L}_k^{\transpose}$.
\end{itemize}
\end{theorem}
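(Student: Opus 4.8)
The plan is to prove the statement by induction on the total index $\ind\ored[*](\E,\A) + \ind\cred[*](\E,\A)$ (equivalently, on $\dim\Mv + \dim\Vv$), assembling an adapted basis from the splittings furnished by \autoref{lma_coupling_system} and its dual. In the base case both indices vanish, so $\E$ is invertible; by \autoref{prop:inddefects} and its dual all defects are then zero, there are no blocks of type $\mat{N}_k$, $\mat{L}_k$ or $\mat{L}_k^{\transpose}$ with $k\geq1$ to produce, and the Kronecker form is simply the Jordan canonical form of $\E\inv\A$ placed in the regular part $(\Id,\A')$. The claim holds vacuously, and this anchors the induction.

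For the inductive step, suppose $\ind\ored[*](\E,\A)>0$ and perform one observation reduction (if instead only $\ind\cred[*](\E,\A)>0$, argue with the dual reduction). By \autoref{lma_coupling_system} we split $\Mv = \Mv\ored \oplus \sect{\coim\pline[\E]}\oplus\sect{\coim\prline}$ and $\Vv = \Vv\ored\oplus\sect{\im\pline}\oplus\sect{\coker\pline}$, where $\A$ restricts to an isomorphism $\sect{\coim\pline}\to\sect{\im\pline}$ and $\E\colon\sect{\coim\pline[\E]}\to\sect{\coker\E\ored}\subset\Vv\ored$ is an isomorphism, while the $\sect{\coker\pline}$ directions are reached by neither $\E$ nor $\A$ and the seed $\sect{\coim\prline}\subset\ker\E$ is killed by $\E$ and sent by $\A$ into $\sect{\im\pline}$. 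Thus the $\beta\ored[0]$ vectors spanning $\sect{\coker\pline}$ are zero-rows (the standalone smallest observation blocks), and the $\nildef[1]$ vectors spanning $\sect{\coim\prline}$ form the standalone smallest nilpotent blocks, since both families are precisely the part of $(\E,\A)$ that disappears under the reduction.

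The crucial point is that the remaining peeled directions $\sect{\coim\pline[\E]}$, together with their images $\sect{\im\pline}$ and $\sect{\coker\E\ored}$, are not free-standing but are \emph{spliced} onto the blocks already carried by $(\E\ored,\A\ored)$: the isomorphism $\E\colon\sect{\coim\pline[\E]}\to\sect{\coker\E\ored}$ supplies the off-diagonal coupling linking each new domain direction to the top of an existing chain, while $\A\colon\sect{\coim\pline[\E]}\to\sect{\im\pline}$ supplies the new diagonal entry, so that an $\mat{N}_{k-1}$ block of the reduced system is lengthened into an $\mat{N}_k$ block of $(\E,\A)$ and an $\mat{L}_{k-1}$ block into an $\mat{L}_k$ block, whereas the $\mat{L}_k^{\transpose}$ blocks, untouched by observation reduction, are inherited verbatim. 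This is exactly the block-level shadow of the defect identities of \autoref{prop:defred} and \autoref{prop:defcomm}: observation reduction fixes the control sequence $\ker\rline[\A\cred[m]]$ and shifts $\coker\pline[\A\ored[n]]$ and $\coim\prline[\A\ored[n]]$ down by one index, matching the drop in block size; a control reduction behaves dually, building and extending the $\mat{N}$ and $\mat{L}^{\transpose}$ families while inheriting $\mat{L}$. Since the reduced system has strictly smaller total index, the induction hypothesis applies, and combining its block list with the level-$0$ seeds and the splice yields, for every $k\geq1$, precisely $\nildef[k]$ blocks $\mat{N}_k$, $\beta\ored[k]$ blocks $\mat{L}_k$ and $\beta\cred[k]$ blocks $\mat{L}_k^{\transpose}$; the accompanying dimension bookkeeping, that the peeled pieces have the stated sizes, is the content of \autoref{prop_rel_defect_dims}.

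The main obstacle is the splice itself. One must choose the sections $\sect{\,\cdot\,}$ at the successive reduction levels \emph{compatibly}, so that in the assembled basis the composites of $\E$ and $\A$ realize exactly the bidiagonal shift patterns of $\mat{N}_k^{\E}$, $\mat{N}_k^{\A}$, $\mat{L}_k^{\E}$, $\mat{L}_k^{\A}$ and their transposes, rather than some merely block-triangular perturbation of them. The delicate case is the nilpotent family, since a single $\mat{N}_k$ block can be reduced from either end: its observation- and control-extensions must be glued into one consistent chain, and it is precisely the commutativity of the two reductions (\autoref{prop:commutativity}) that makes this gluing well-defined. Establishing that such compatible sections exist and normalize the coupling to the canonical bidiagonal form is the technical heart of the argument; everything else is the dimension counting already delivered by \autoref{prop:defcomm} and \autoref{prop_rel_defect_dims}.
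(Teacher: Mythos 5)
The paper never actually proves \autoref{thm_Kronecker}: the theorem is stated bare after \autoref{lma_coupling_system} and is supported only by the illustration in \autoref{fig_kronecker}, so there is no argument of the paper to compare yours against. The skeleton you chose is certainly the intended one, and it is essentially the classical reduction proof of~\cite{vanDooren} and~\cite{Wilkinson}: induct on the total index, peel off one observation (dually, control) reduction, split $\Mv$ and $\Vv$ by \autoref{lma_coupling_system}, recognize $\sect{\coker\pline}$ as zero rows and $\sect{\coim\prline}$ as the standalone smallest nilpotent blocks, splice $\sect{\coim\pline[\E]}$ onto the reduced system, and transfer the defect sequences with \autoref{prop:defred} and \autoref{prop:defcomm}. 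Your base case, the strict decrease of the index, and the counting are all sound (modulo the paper's own off-by-one ambiguity in how the defects are indexed).

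However, there is a genuine gap, which you name explicitly but do not close: the splice normalization. The induction hypothesis hands you a Kronecker basis of $(\E\ored,\A\ored)$, in which $\im\E\ored$ has a preferred complement inside $\Vv\ored$, namely the span of the chain tops of the $\mat{N}$- and $\mat{L}$-blocks; but \autoref{lma_coupling_system} hands you an unrelated complement $\sect{\coker\E\ored}$ and sections $\sect{\coim\pline[\E]}$, $\sect{\im\pline}$ that know nothing about that basis. For such an arbitrary choice, a new domain vector $v\in\sect{\coim\pline[\E]}$ has $\E v$ equal to a chain top only modulo $\im\E\ored$, and the error term may sit on any existing block --- including the $\mat{L}_j^{\transpose}$-blocks you claim are inherited verbatim --- so what the splitting gives you directly is exactly the block-triangular perturbation you warn against, not the Kronecker form. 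Closing this requires the coupled correction that constitutes the real content of the theorem: replace $v$ by $v'=v-u$ with $u\in\Mv\ored$ chosen so that $\E\ored u$ kills the error term; observe that this spoils the $\A$-column, since $\A v'=\A v-\A\ored u$ now has components in $\Vv\ored$; absorb that by redefining the new codomain basis vector to be $\A v'$ itself; and verify that this change of basis disturbs neither the coordinates of the reduced blocks nor those of the other seeds, so every $\mat{N}_j$ and $\mat{L}_j$ chain is lengthened exactly and nothing else moves. Your proposal replaces this argument with the statement that it is the ``technical heart''. Moreover, the appeal to \autoref{prop:commutativity} does not supply it: commutativity gives a canonical isomorphism between reduced systems, not an alignment of chosen sections with a chosen Kronecker basis, and in an induction that peels one reduction per step the two-ended gluing you invoke it for never actually occurs.
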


\matrixfigure{fig_kronecker}{kronecker_fig}{
An illustration of the defects $\beta\ored[j]$, $\beta\cred[j]$ and $\nildef[j]$, and their relation to the Kronecker decomposition described in \autoref{thm_Kronecker}.
The difference of size of the squares is exactly given by the defects $\nildef[j]$, $\beta\ored[j]$ and $\beta\cred[j]$.
The dark squares bearing the number $j$ represent all the nilpotent blocks $\mat{N}_j$; there are $\nildef[j]$ such blocks.
The light squares in the lower-right part bearing the number $j$ represent the $\mat{L}$-blocks $\mat{L}_j$.
There are $\beta\ored[j]$ such blocks.
The light squares in the upper-left part bearing the number $j$ represent the $\mat{L}$-blocks $\mat{L}_j^{\transpose}$.
There are $\beta\cred[j]$ such blocks.
This figure also allows to check the formulae of \autoref{prop_rel_defect_dims}.
}

% \begin{lemma}
% \[\begin{aligned}
% N\red{k}  &=
% W\red{k} &=
% \end{aligned}\]
% \end{lemma}

% \begin{proof}
% The proof is essentially already contained in the proof of \autoref{thm_linear_decomposition}
% \end{proof}

\subsection{Weak Equivalence and Strangeness}
\label{sec_strangeness}

In order to illustrate the power of the reduction point of view, and to show an application of \autoref{lma_coupling_system}, we show a normal form for the weak equivalence relation.
% Instead of looking at the equivalence classes for the equivalence of matrices, that is, pairs of invertible operators acting on $(\E,\A)$ as $(\Pm\E\Qm,\Pm\A\Qm)$, we look at the \emph{weak} equivalence.

\newcommand*\Gw{\mathsf{G}}
\newcommand*\Gr{\mathsf{H}}

Define the set consisting of block matrices
\begin{align}
(\Qm,\Rm)\coloneqq \begin{bmatrix}
	\Qm & \Rm \\
	& \Qm
\end{bmatrix}
,
\qquad
\Qm \in \GL[n]
\quad
\Rm \in \RR^{n\times n}
\end{align}
This set forms a group that we denote $\Gr$.
The inverse of an element $(\Qm,\Rm)$ is given by $(\Qm\inv, -\Qm\inv \Rm\Qm\inv)$.
The subset of group elements of the form $(\one,\Rm)$ is a commutative, normal subgroup of $\Gr$, which is isomorphic to the vector space $\RR^{n\times n}$ considered as a commutative group.
Finally, elements of $\Gr$ can be decomposed as a product of elements of $\GL[n]$ and $\RR^{n\times n}$ by:
\begin{align}
	\label{eq:semidirect}
(\Qm,\Rm) = (\Qm,0) \mul (\one,\Qm\mul\inv\Rm) = (\one, \Rm\mul\Qm\inv)\mul (\Qm,0)
\end{align}

Define the group $\Gw\coloneqq \GL[m] \times \Gr$.
An element $(\Pm,(\Qm,\Rm))$ acts on a system $(\E,\A)$ by matrix multiplication as
\begin{align}
	(\Pm,(\Qm,\Rm)) \cdot (\E,\A) \coloneqq \Pm\inv \mul \begin{bmatrix}
		\E & \A
	\end{bmatrix}
	\mul
	(\Qm,\Rm)
\end{align}
The explicit formula is
\begin{align}
	\label{weak_equiv_action_eq}
% (\Pm,(\Qm,\Rm)) \cdot (\E,\A) = (\Pm\E\Qm\inv, \Pm\mul\paren{-\E\Qm\inv\Rm + \A}\mul\Qm\inv)
(\Pm,(\Qm,\Rm)) \cdot (\E,\A) = (\Pm\inv\E\Qm, \Pm\inv\mul\paren{\E\mul\Rm + \A\mul\Qm})
,
\end{align}
and agrees with that of~\cite[eq.~VII.1.8]{HaWa96}.

We say that two systems $(\E,\A)$ and $(\E', \A')$ are \demph{weakly equivalent} if they are on the same $\Gw$-orbit.
For the study of the orbits of the weak equivalence group,~\eqref{eq:semidirect} gives
\begin{equation}
\label{orbit_sep_eq}
(\Pm,(\Qm,\Rm)) = (\Pm,(\Qm,0))\mul (\one,(\one,\Rm \Qm\inv)) = (\one,(\one,\Qm\inv \Rm)) \mul (\Pm,(\Qm,0))
\end{equation}
so we can restrict our attention to one subgroup at a time.

We now proceed to define a normal form that settles the question of weak equivalence.

% \subsubsection{Orbit Invariants}

The orbits of the weak equivalence group action~\eqref{weak_equiv_action_eq} were studied in~\cite{Mehrmann}, in which the authors exhibited a complete set of invariants.
We give an alternative proof here, thereby shedding some light on the notion of \emph{strangeness}.

\begin{theorem}
\label{thm_strangeness}
	A complete set of invariants for the group action~\eqref{weak_equiv_action_eq} is given by
	\begin{enumerate}
		\item $d \coloneqq \dim \Vv\ored[2]$,
		%\item $u \coloneqq \dim \coker [\A]$,
		\item $a \coloneqq  \dim \coim \prline$
			% $\nildef[1]$,
		% \item $s \coloneqq \dim \Vv\ored[1] - d$.
		\item $s \coloneqq \dim \Vv\ored[1] / \Vv\ored[2]$.
	\end{enumerate}
\end{theorem}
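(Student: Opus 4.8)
The plan is to reduce the whole classification to the linear algebra of a single pair of maps, and then to read off the three integers as its complete invariants. The key point is that, modulo the normal subgroup $\{(\one,(\one,\Rm))\}$ of $\Gw$, a system is encoded by the pair $(\E,\pline)$ of~\eqref{eq:defpline} alone.

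First I would identify the orbits. The subgroup $\{(\one,(\one,\Rm))\}$ acts by $(\E,\A)\mapsto(\E,\E\Rm+\A)$, and over a field the maps $\E\Rm$, as $\Rm$ ranges over $\RR^{n\times n}$, fill out exactly the morphisms $\Mv\to\Vv$ whose image lies in $\im\E$ (lift any such map through a section of $\E$). These are precisely the morphisms annihilated by the projection $\Vv\to\coker\E$, so two systems lie in a common sub-orbit if and only if they have the same $\E$ and the same composite $\pline\colon\Mv\to\coker\E$. By the semidirect decomposition~\eqref{eq:semidirect} the quotient of $\Gw$ by this subgroup is $\GL[m]\times\GL[n]$, acting on $(\E,\pline)$ as a simultaneous change of basis on $\Vv$ and $\Mv$; hence weak equivalence classes correspond bijectively to $\GL[m]\times\GL[n]$-orbits of pairs $(\E,\pline)$. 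Now $\Vv\ored=\im\E$, the map $\prline$ is the restriction of $\pline$ to $\ker\E$, and $\Mv\ored=\ker\pline$ with $\E\ored=\E|_{\Mv\ored}$; so $a=\dim\coim\prline$, $d=\dim\im\E\ored$ and $s=\dim\Vv\ored-d$ depend only on $(\E,\pline)$ and are carried to isomorphic data by any change of basis, which already gives their invariance.

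For completeness I would put $(\E,\pline)$ into a normal form (compare \autoref{lma_coupling_system}). Choosing $\Mv=\ker\E\oplus\Mv'$ and $\Vv=\im\E\oplus C$ reduces $\E$ to an isomorphism $\Mv'\to\im\E$ (zero on $\ker\E$) and presents $\pline$ as a pair $(\prline,\mu)$ with $\prline\colon\ker\E\to\coker\E$ and $\mu\colon\Mv'\to\coker\E$. The stabiliser of $\E$ acts by $\GL$ on each summand together with the re-choice of the complement $\Mv'$, which alters $\mu$ by $\prline\circ\phi$ for $\phi\colon\Mv'\to\ker\E$; with this freedom I normalise $\prline$ to rank $a$ and then annihilate $\mu$ on $\mu\inv(\im\prline)$, leaving $\mu$ of rank $s=\dim(\im\pline/\im\prline)$ with image complementary to $\im\prline$. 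The orbit of $(\E,\pline)$ is thus described completely by the five integers $\dim\ker\E$, $\operatorname{rank}\E$, $\dim\coker\E$, $a$ and $s$. A short computation from this form gives $\E(\ker\pline)=\E(\mu\inv(\im\prline))$, whence $d=\dim\Vv\ored[2]=\operatorname{rank}\E-s$; since the action fixes $\dim\Mv$ and $\dim\Vv$, the remaining integers are recovered as $\operatorname{rank}\E=d+s$, $\dim\ker\E=\dim\Mv-d-s$ and $\dim\coker\E=\dim\Vv-d-s$. For fixed domain and codomain the five invariants are therefore equivalent to $(d,a,s)$, so two systems sharing the triple have the same normal form and are weakly equivalent; a concrete representative is $d$ copies of $(\Id_1,0)$, $a$ copies of $\mat{N}_0$ and $s$ copies of $\mat{N}_1$, padded by the zero rows and columns that the sizes force.

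The step I expect to be most delicate is the normalisation of the pair $(\prline,\mu)$: one must show it carries no invariant beyond the two ranks $a$ and $s$, in particular that the apparent coupling between $\im\prline$ and $\im\mu$ can always be undone. This is exactly where the re-choice of the complement $\Mv'$—the $\prline\circ\phi$ freedom, which is the visible shadow of the $\Rm$-shift—is indispensable, and it is the precise mechanism by which weak equivalence is coarser than strict equivalence: it is what collapses each higher block $\mat{N}_k$ into one copy of $\mat{N}_1$ together with $k-1$ trivial dynamic directions.
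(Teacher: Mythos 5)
Your argument is sound, but it is organized quite differently from the paper's proof, so it is worth comparing the two. The paper argues in two separate steps: first, invariance, by checking directly that a shift $(\E,\A)\mapsto(\E,\E\Rm+\A)$ leaves $\pline$ unchanged (because $\E\Rm$ projects to zero in $\coker\E$) and therefore preserves $\Mv\ored$, $\Vv\ored$, $(\E\ored,\A\ored)$, $\Vv\ored[2]$ and $\prline$; second, completeness, by applying \autoref{lma_coupling_system} to split $\Mv$ and $\Vv$, constructing the explicit shift $\Rm=-\mat{K}\A\Pi$ (where $\mat{K}$ is a right inverse of $\E$ on $\im\E$ and $\Pi$ projects onto $\Mv\ored$) so that $\A$ vanishes on $\Mv\ored$, and then choosing bases to land on the canonical form of \autoref{fig_strangeness}. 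You instead quotient out the normal subgroup of shifts $(\one,(\one,\Rm))$ at the outset: since over a field the maps $\E\Rm$ are exactly those with image in $\im\E$, its orbits are faithfully encoded by the pairs $(\E,\pline)$, and the whole problem collapses to the $\GL[m]\times\GL[n]$-classification of such pairs. This makes the paper's invariance step automatic ($d$, $a$ and $s$ are manifestly functions of the pair, and your identity $\E(\ker\pline)=\E(\mu\inv(\im\prline))$ correctly identifies the rank of $\mu$ modulo $\im\prline$ with the strangeness $s$), and it turns completeness into an elementary normal-form computation for $(\prline,\mu)$, with the residual freedom $\mu\mapsto\mu+\prline\circ\phi$ appearing as part of the stabilizer of $\E$ rather than as an explicitly constructed $\Rm$. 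Both routes rest on the same two underlying facts, namely realizability of any map into $\im\E$ as $\E\Rm$ and the splitting of subspaces, but they buy different things: the paper's proof stays uniform with its reduction machinery and produces \autoref{fig_strangeness} directly, while yours isolates conceptually why exactly three integers survive and yields concrete block representatives ($d$ copies of $\E=1,\A=0$; $a$ copies of $\mat{N}_0$; $s$ copies of $\mat{N}_1$, padded by zero rows and columns) that tie the weak-equivalence normal form back to the Kronecker blocks. The step you flag as delicate, namely that no invariant couples $\im\mu$ to $\im\prline$, is handled correctly by your argument, since the shift freedom kills exactly the component of $\mu$ lying in $\im\prline$, and the remaining configuration is rigid once the ranks and the ambient dimensions are fixed.
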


The integer $s$ is called ``strangeness'' in~\cite{Mehrmann}.

\begin{proof}
\begin{enumerate}
\item
First we have to check that the three integers are indeed invariants of the group action.
Clearly, they are invariants by transformations of the form $(\Pm,(\Qm,0))$, which are merely equivalent transformation.

Let us examine the case of a transformation
\begin{align}
(\Eb,\Ab) \coloneqq (\one,\one,\Rm)\cdot (\E,\A) = (\E, \E \Rm + \A)
.
\end{align}
First, notice that
\(
	{\vv}\ored = \im\Eb  = \im\E = \Vv\ored
\).
Now, notice that the projection of $\A$ on $\coker\E$ is the same as the projection of $\Ab$ onto $\coker\Eb$, as $\E\Rm$ projects to zero, so $\pline[\Ab] = \pline$.
This gives $\mv\ored = \Mv\ored$.
We conclude that
\begin{align}
(\Eb\ored,\Ab\ored) = (\E\ored,\A\ored)
.
\end{align}
In particular, this shows that
\(
	\vv\ored[2] = \im\Eb\ored = \im\E\ored = \Vv\ored[2]
\).
Using~\eqref{orbit_sep_eq}, this shows that the spaces $\Vv\ored$, $\Vv\ored[2]$ and $\Mv\ored$ are invariants of all of the weak equivalence group transformations.
Finally, $\prline[\Ab]$ is the restriction of $\pline[\Ab] = \pline$ on $\ker\Eb = \ker\E$, so $\prline[\Ab] = \prline$.
This shows that the defect $\nildef[1]$ is invariant.

We conclude that that $\dim\Vv\ored$, $\dim\Vv\ored[2]$ and $\nildef[1]$ are invariant under the group action.

\item
Now we show that the integers $d$, $a$ and $s$ are the only invariants.
In order to show that, we show that a system $(\E,\A)$ is weakly equivalent to a canonical form that depends only on those three integers.
%This is similar to what we will do in \autoref{sec_coupling}.

In order to achieve this, we decompose $\Mv$ and $\Vv$ using \autoref{lma_coupling_system}.
\begin{enumerate}
\item
Let us choose an arbitrary decomposition
\[
\Vv\ored = \Vv\ored[2] \oplus \sect{\coker\E\ored}
.
\]
We may now apply \autoref{lma_coupling_system} to obtain spaces $\sect{\coim\pline[E]}$, $\sect{\coim\prline}$, $\sect{\im\pline}$ and $\sect{\coker\pline}$ equipped with appropriate bases.
% Using \autoref{prop_dimKZ_ab} we obtain $s = \dim \Wv''$ and $a = \dim \Kv'$.
\item
Finally, define $\Pi$ as a projector from $\Mv$ to $\Mv\ored$ along $\sect{\coim\pline}$. 
Let $\mat{K}$ be a right inverse for $\E$ on $\Vv\ored = \im\E$. 

Define
\[\Rm := -\mat{K}\A \Pi
,
\]
so $\E\Rm + \A = 0$ on $\Mv\ored$.

As a result, if we define the new system $(\Eb,\Ab)$ by
\[(\Eb,\Ab) := (\E,\E\Rm+\A)
,
\]
then the restriction of $\Ab$ on $\Mv\ored$ is zero.
\item
Now we can choose a basis of $\Mv\ored$ and of $\Vv\ored[2]=\im\E\ored$ such that $\Eb$ is represented by the identity matrix on $\Mv\ored$. 

This provides us with complete basis of $\Mv$ and $\Vv$ such that the matrices $\Eb$ and $\Ab$ take the form described in \autoref{fig_strangeness}.
\end{enumerate}
\end{enumerate}
\end{proof}

\begin{figure}
	\centering
\includegraphics[width=.5\textwidth]{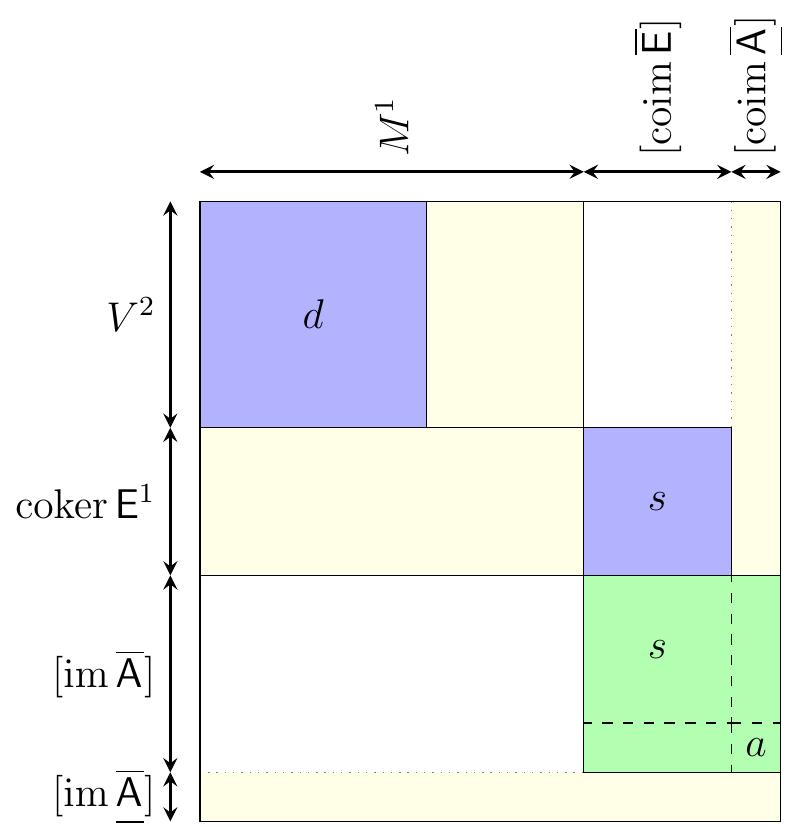}
\caption{Canonical form of a matrix corresponding to the weak equivalence. The matrix $\E$ is represented in blue, whereas the matrix for $\A$ is represented in green. All such squares are identity matrices. The rest is filled out by zero entries.}
\label{fig_strangeness}
\end{figure}

\appendix

\section{Appendix}
\label{sec:appendix}
\begin{align}
	\label{diag:betacred}
\begin{tikzpicture}[ineq]
\matrix(m) [commdiag, ]
{
	\& 0 \& 0 \& 0 \&\\
	 0 \& \diagdim{\ker\rline}[\beta\cred]  \& \ker\A  \&  \ker\A\cred \& 0\\
	0 \& \ker\E \& \Uv \& \Uv\cred \& 0\\
0 \& \im\rline \& \W \& \W\cred \& 0 \\
  \& 0 \& \coker\A \& \coker\A\cred \& 0\\
 \&   \& 0 \& 0 \&\\
};
\path[->,
% amph
]
(m-2-1) edge (m-2-2)
(m-2-2) edge (m-2-3)
(m-2-3) edge (m-2-4)
(m-2-4) edge (m-2-5)
;
\path[->]
(m-3-1) edge (m-3-2)
(m-3-2) edge (m-3-3)
(m-3-3) edge (m-3-4)
(m-3-4) edge (m-3-5)
;
\path[->]
(m-4-1) edge (m-4-2)
(m-4-2) edge (m-4-3)
(m-4-3) edge (m-4-4)
(m-4-4) edge (m-4-5)
;
\path[->,
% amph
]
% (m-5-1) edge (m-5-2)
(m-5-2) edge (m-5-3)
(m-5-3) edge (m-5-4)
(m-5-4) edge (m-5-5)
;
\path[->]
(m-1-2) edge (m-2-2)
(m-2-2) edge (m-3-2)
(m-3-2) edge node[auto] {$\pline$} (m-4-2)
(m-4-2) edge (m-5-2)
% (m-5-2) edge (m-6-2)
;
\path[->]
(m-1-3) edge (m-2-3)
(m-2-3) edge (m-3-3)
(m-3-3) edge node[auto] {$\A$} (m-4-3)
(m-4-3) edge (m-5-3)
(m-5-3) edge (m-6-3)
;
\path[->,
% amph
]
(m-1-4) edge (m-2-4)
(m-2-4) edge (m-3-4)
(m-3-4) edge node[auto] {$\A\cred$} (m-4-4)
(m-4-4) edge (m-5-4)
(m-5-4) edge (m-6-4)
;
\end{tikzpicture}
\end{align}
\begin{align}
	\label{diag:alphacred}
\ninediag%
{\ker\E\cred \& \Mv\cred \& \Mv\cred[2]}%
{\im\rline \& \Vv \& \Vv\cred}%
{\diagdim{\im\prline}[\nildef[1]] \& \coker\E \& \coker\E\cred}%
[\path (m-2-2) edge node[auto]{$\rline[\E]$} (m-3-2)
(m-2-3) edge node[auto] {$\E$} (m-3-3)
(m-2-4) edge node[auto] {$\E\cred$} (m-3-4)
;]
	% \begin{tikzpicture}[ineq]
	% 	\matrix(m)[commdiag]
	% 	{ \& 0 \& 0 \& 0 \& \\
	% 	0 \& \ker\E\cred[1] \& \ker\E \& \coim\prline\diagdim{\nildef[1]} \& 0\\
	% 	0 \& \Mv\ored \& \Mv \& \coim\pline \& 0 \\
	% 	0 \& \Vv\ored[2] \& \Vv\ored \& \coker\E\ored \& 0 \\
	% 	 \& 0 \& 0 \& 0 \&  \\};
	% 	\path[->]
	% 	(m-1-2) edge (m-2-2)
	% 	(m-2-2) edge (m-3-2)
	% 	(m-3-2) edge node[auto] {$\E\ored$} (m-4-2)
	% 	(m-4-2) edge (m-5-2)
	% 	;
	% 	\path[->]
	% 	(m-1-3) edge (m-2-3)
	% 	(m-2-3) edge (m-3-3)
	% 	(m-3-3) edge node[auto] {$\E$} (m-4-3)
	% 	(m-4-3) edge (m-5-3)
	% 	;
	% 	\path[->]
	% 	(m-1-4) edge[amph] (m-2-4)
	% 	(m-2-4) edge[amph] (m-3-4)
	% 	(m-3-4) edge[amph] (m-4-4)
	% 	(m-4-4) edge[amph] (m-5-4)
	% 	;
	% 	% \path[->]
	% 	% (m-2-5) edge (m-3-5)
	% 	% (m-3-5) edge (m-4-5)
	% 	% (m-4-5) edge (m-5-5)
	% 	% ;
	% 	\path[->]
	% 	(m-2-1) edge[amph] (m-2-2)
	% 	(m-2-2) edge[amph] (m-2-3)
	% 	(m-2-3) edge[amph]  (m-2-4)
	% 	(m-2-4) edge[amph] (m-2-5)
	% 	;
	% 	\path[->]
	% 	(m-3-1) edge (m-3-2)
	% 	(m-3-2) edge (m-3-3)
	% 	(m-3-3) edge  (m-3-4)
	% 	(m-3-4) edge (m-3-5)
	% 	;
	% 	\path[->]
	% 	(m-4-1) edge (m-4-2)
	% 	(m-4-2) edge (m-4-3)
	% 	(m-4-3) edge (m-4-4)
	% 	(m-4-4) edge (m-4-5)
	% 	;
	% \end{tikzpicture}
\end{align}

\subsection*{Acknowledgement}
We wish to acknowledge the support of the \href{http://wiki.math.ntnu.no/genuin}{GeNuIn Project}, funded by the Research Council of Norway, and that of its supervisor, Elena Celledoni, as well as the support of the J~C~Kempe memorial fund (grant no SMK-1238).

\newcommand{\includebibliography}[1]{
	\bibliographystyle{abbrvurl}
\bibliography{../#1}
}
\includebibliography{IDE}

\end{document}